\let\proof\@undefined
\let\endproof\@undefined
\newcommand{\real}{{\mathbb{R}}}
\newcommand{\reals}{\real}
\newtheorem{theorem}{Theorem}[section]
\newtheorem{proposition}[theorem]{Proposition}
\newtheorem{lemma}[theorem]{Lemma}
\newtheorem{remark}[theorem]{Remark}
\newtheorem{definition}[theorem]{Definition}
\newcommand{\argmin}{\operatornamewithlimits{argmin}}
\newcommand{\expectation}[1]{\mbox{$\mathbb{E}\left[#1\right]$}}
\newcommand{\fil}{\mathcal F}
\newcommand{\cs}{\mathcal Z}
\newcommand{\csd}{\mathcal U}
\newcommand{\risk}{\rho}
\title{\LARGE \bf
A Uniform-grid Discretization Algorithm for \\Stochastic Control with Risk Constraints
}
\author{Yin-Lam Chow, Marco Pavone
\thanks{Y.-L. Chow and M. Pavone are with the Department of Aeronautics and Astronautics, Stanford University, Stanford, CA 94305, USA. Email: {\tt \{ychow, pavone\}@stanford.edu}.}}
\begin{document}

\maketitle
\thispagestyle{empty}
\pagestyle{empty}

\begin{abstract}
In this paper, we present a discretization algorithm for finite horizon risk constrained dynamic programming algorithm in \cite{Chow_Pavone_13_1}. Although in a theoretical standpoint,  Bellman's recursion provides a systematic way to find optimal value functions and generate optimal history dependent policies, there is a serious computational issue. Even if the state space and action space of this constrained  stochastic optimal control problem are finite, the spaces of risk threshold and the feasible risk update are closed bounded subset of real numbers. This prohibits any direct applications of unconstrained finite state iterative methods in dynamic programming found in \cite{bertsekas_05}. In order to approximate Bellman's operator derived in \cite{Chow_Pavone_13_1}, we discretize the continuous action spaces and formulate a finite space approximation for the exact dynamic programming algorithm. We will also prove that the approximation error bound of optimal value functions is bound linearly by the step size of discretization. Finally, details for implementations and possible modifications are discussed.
\end{abstract}
\section{Introduction}

Constrained stochastic optimal control problems naturally arise in decision-making problems where one has to consider multiple objectives. Instead of introducing an aggregate utility function that has to be optimized, one consider a setup where one cost function is to be minimized while keeping the other cost functions below some given bounds. Application domains are broad and include engineering, finance, and logistics. Within a constrained framework, the most common setup is, arguably, the optimization of a \emph{risk-neutral expectation} criterion subject to a \emph{risk-neutral} constraint \cite{Chen_07, Chow_Pavone_13_1}. This model, however, is not suitable in scenarios where risk-aversion is a key feature of the problem setup. To introduce risk aversion, in \cite{Chow_Pavone_13_1} the authors studied stochastic optimal control problems with risk constraints, where risk is modeled according to \emph{dynamic, time-consistent risk metrics} \cite{rus_09, Shapiro_Dentcheva_Ruszczynski_09}. These metrics have the desirable property of ensuring rational consistency of risk preferences across multiple periods \cite{Shapiro_Dentcheva_Ruszczynski_09}. (In contrast, traditional static risk metrics, such as conditional value at risk, can lead to potentially ``inconsistent" behaviors, see \cite{Iancu_11} and references therein.) In particular, in \cite{Chow_Pavone_13_1}, the authors developed a dynamic programming approach that allows to (formally) compute the optimal costs  by value iteration via a \emph{constrained} dynamic programming operator. The key idea is that due to the compositional structure of dynamic risk constraints, the optimization problem can be cast as a Markov decision problem (MDP) on an augmented state space where Markov policies are optimal (as opposed to the original problem) and Bellman's recursion can be applied. Henceforth, we will refer to such augmented MDP as AMDP. However, even if both the state space and action spaces for the original optimization problem are assumed to be finite, the augmented state in AMDP contains state variables that are \emph{continuous} and lie in bounded subsets of the real numbers. Hence, apart from a few cases when an analytical solution is available, the problem must be solved numerically.

Accordingly, the objective of this paper is to develop a numerical method for the solution of stochastic optimal control problems with dynamic, time-consistent risk measures. The approach is to discretize the continuous states in AMDP. Numerical algorithms for the solution of continuous MDPs is indeed a fairly mature field. In \cite{whitt_78, gordon_99, Chow_Tsitsiklis_91}, multi-grid state/action space discretization methods are developed with bounds available on how fine the discretization should be in order to achieve a desired accuracy. In \cite{Rust_97}, the grid for discretization  is chosen via randomized sampling techniques and Monte Carlo methods. In  \cite{tsitsiklis_van_roy_96}, the value functions are approximated by a finite number of basis functions. Variable resolution grid sampling techniques have been  proposed in \cite{Munos_Moore_00, Munos_Moore_98, Munos_Moore_97}. However, in general, these results assume that the dynamic programming operator is \emph{unconstrained}, i.e., actions and future states are only constrained to lie in their respective feasible sets. In contrast, the dynamic programming operator for AMDP \emph{constrains} actions and future states in a more complicated fashion (see Section \ref{sec:prelim} for more details). This precludes the application of current approximation algorithms to the numerical solution of AMDP.

Our approach is to extend the uniform grid discretization approximation developed in \cite{Chow_Tsitsiklis_91}. This requires the development of novel Lipschitz bounds for constrained dynamic programming operators. We show that convergence is linear in the step size, which is the same convergence rate for discretization algorithms for unconstrained dynamic programming operators \cite{Chow_Tsitsiklis_91}. The importance of our result is fourfold. First, we provide a sound numerical method for the solution of AMDP. Second, our results provide the basis to develop more sophisticated approximation algorithms (e.g., variable grid size, reinforcement learning, etc.) for the solution of stochastic optimal control problems with dynamic, time-consistent risk constraints.  Third, a particular type or dynamic, time-consistent ``risk" constraint is, of course, the risk neutral expectation. Hence, our results provide as a particular case a numerical algorithm to solve the dynamic programing equations that arise in traditional constrained stochastic optimal control problems \cite{Chen_07}. To the best of our knowledge, this is the first practical algorithm to solve such dynamic programming equations. Finally, the ideas and techniques introduced in the current paper could be useful for the development of approximation algorithms for other types of constrained dynamic programming operators.

The rest of the paper is structured as follows. In Section \ref{sec:prelim} we present background material for this paper, in particular about dynamic, time-consistent risk metrics and stochastic optimal control with dynamic risk constraints \cite{Chow_Pavone_13_1}. In Section \ref{sec:dis} we present and theoretically study a uniform grid approximation algorithm for the augmented MDP; in particular, we show that the error bound is linear to the discretization step size. In Section \ref{sec:num}, we study by numerical simulations the performance of the proposed algorithm and discuss details of implementations using Branch and Bound techniques. Finally, in Section \ref{sec:conc}, we draw our conclusions and offer directions for future work.

\section{Preliminaries}\label{sec:prelim}
In this section we provide some background for the theory of dynamic, time-consistent risk metrics and stochastic optimal control with dynamic risk constraints, on which we will rely extensively later in the paper.
\subsection{Notations}
In this paper, given a real-valued function $f$, $\text{dom}(f)$ denotes its domain and $\text{epi} f$ denotes its epigraph (i.e., the set of points lying on or above its graph). Let $\nu$ and $\mu$ be two probability measures on the same measurable space, then $\nu\ll\mu$ denotes that $\nu$ is absolutely continuous with respect to $\mu$ (i.e., $\nu(E)=0$ for every set $E$ for which $\mu(E) =0$).

\subsection{Markov Decision Processes}
A finite Markov Decision Process (MDP) is a four-tuple $(S, U, Q, U(\cdot))$, where $S$, the state space, is a finite set; $U$, the control space, is a finite set; for every $x\in S$, $U(x)\subseteq U$ is a nonempty set which represents the set of admissible controls when the system state is $x$; and, finally, $Q(\cdot|x,u)$ (the transition probability) is a conditional probability on $S$ given the set of admissible state-control pairs, i.e., the sets of pairs $(x,u)$ where $x\in S$ and $u\in U(x)$.

Define the space $H_k$ of admissible histories up to time $k$ by $H_k = H_{k-1} \times S\times U$, for $k\geq 1$, and $H_0=S$. A generic element $h_{0,k}\in H_k$ is of the form $h_{0,k} = (x_0, u_0, \ldots , x_{k-1}, u_{k-1}, x_k)$. Let $\Pi$ be the set of all deterministic policies with the property that at each time $k$ the control is a function of $h_{0,k}$. In other words, $\Pi := \Bigl \{ \{\pi_0: H_0 \rightarrow U,\, \pi_1: H_1 \rightarrow U, \ldots\} | \pi_k(h_{0,k}) \in U(x_k) \text{ for all } h_{0,k}\in H_k, \, k\geq 0 \Bigr\}$.

\subsection{Dynamic, time-consistent, risk measures}
Consider a probability space $(\Omega, \fil, P)$, a filtration $\fil_1\subset \fil_2 \cdots \subset \fil_N \subset \fil$, and an adapted sequence of random variables $Z_k$, $k\in \{0, \cdots,N\}$. We assume that $\fil_0 = \{\Omega, \emptyset\}$, i.e., $Z_0$ is deterministic. In this paper we interpret the variables $Z_k$ as stage-wise costs. For each $k\in\{1, \cdots, N\}$, define the spaces of random variables with finite $p$th order moment as $\cs_k:=  L_p(\Omega, \fil_k, P)$, $p\in [1,\infty]$; also, let $\cs_{k, N}:=\cs_k \times \cdots \times \cs_N$.

Roughly speaking, a dynamic risk measure is said time consistent if it is such that when a $Z$ cost sequence is deemed less risky than a $W$ cost sequence from the perspective of a future time $k$, and both sequences yield identical costs from the current time $l$ to the future time $k$, then the $Z$ sequence is deemed as less risky at the current time $l$. It turns out that dynamic, time-consistent risk metrics can be constructed by ``compounding" one-step conditional risk measures, which are defined as follows.


\begin{definition}[Coherent one-step conditional risk measures]
A coherent one-step conditional risk measures is a mapping $\risk_k:\cs_{k+1}\rightarrow \cs_k$, $k\in\{0,\ldots,N\}$, with the following four properties:
\begin{itemize}
\item Convexity: $\risk_k(\lambda Z + (1-\lambda)W)\leq \lambda\risk_k(Z) + (1-\lambda)\risk_k(W)$, $\forall \lambda\in[0,1]$ and $Z,W \in\cs_{k+1}$;
\item Monotonicity:  if $Z\leq W$ then $\risk_k(Z)\leq\risk_k(W)$, $\forall Z,W \in\cs_{k+1}$;
\item Translation invariance:  $\risk_k(Z+W)=Z + \risk_k(W)$, $\forall Z\in\cs_k$ and $W \in \cs_{k+1}$;
\item Positive homogeneity: $\risk_k(\lambda Z) = \lambda \risk_k(Z)$, $\forall Z \in \cs_{k+1}$ and $\lambda\geq 0$.
\end{itemize}
\end{definition}

Then, the following results characterize dynamic, time-consistent risk metrics \cite{rus_09}.
\begin{theorem}[Dynamic, time-consistent risk measures]\label{thrm:tcc}
Consider, for each $k\in\{0,\cdots,N\}$, the mappings $\risk_{k,N}:\cs_{k, N}\rightarrow\cs_k$ defined as
\begin{equation}\label{eq:tcrisk}
\begin{split}
\risk_{k,N} &= Z_k + \risk_k(Z_{k+1} + \risk_{k+1}(Z_{k+2}+\ldots+\\
    &\qquad\risk_{N-2}(Z_{N-1}+\risk_{N-1}(Z_N))\ldots)),
\end{split}
\end{equation}
where the $\risk_k$'s are coherent one-step risk measures. Then, the ensemble of such mappings is a time-consistent dynamic risk measure.
\end{theorem}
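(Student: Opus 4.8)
The plan is to first make the informal notion of time consistency precise and then exploit the nested structure of \eqref{eq:tcrisk} directly. Concretely, I would adopt the standard formalization: the family $\{\risk_{k,N}\}$ is time consistent if, for every pair of times $l < k$ and every pair of cost sequences $Z, W \in \cs_{l,N}$ satisfying $Z_i = W_i$ for all $i \in \{l, \ldots, k-1\}$ together with $\risk_{k,N}(Z_k, \ldots, Z_N) \leq \risk_{k,N}(W_k, \ldots, W_N)$, one has $\risk_{l,N}(Z_l, \ldots, Z_N) \leq \risk_{l,N}(W_l, \ldots, W_N)$. The key structural fact, read off directly from \eqref{eq:tcrisk}, is the one-step recursion
\begin{equation*}
\risk_{k,N}(Z_k, \ldots, Z_N) = Z_k + \risk_k\bigl(\risk_{k+1,N}(Z_{k+1}, \ldots, Z_N)\bigr),
\end{equation*}
valid for every $k \in \{0, \ldots, N-1\}$. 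Here I would first note that the composition is well defined because each $\risk_j$ maps $\cs_{j+1}$ into $\cs_j$, so every intermediate quantity lies in the space on which the next one-step measure acts.

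Second, I would establish time consistency by a finite backward induction that reduces the gap $k - l$ by one at each step. Suppose $Z_i = W_i$ for $i \in \{l, \ldots, k-1\}$ and $\risk_{k,N}(Z) \leq \risk_{k,N}(W)$ (inequalities understood $P$-almost surely). Applying the one-step recursion at index $k-1$ gives
\begin{align*}
\risk_{k-1,N}(Z) &= Z_{k-1} + \risk_{k-1}\bigl(\risk_{k,N}(Z)\bigr),\\
\risk_{k-1,N}(W) &= W_{k-1} + \risk_{k-1}\bigl(\risk_{k,N}(W)\bigr).
\end{align*}
Monotonicity of $\risk_{k-1}$ applied to $\risk_{k,N}(Z) \leq \risk_{k,N}(W)$ yields $\risk_{k-1}(\risk_{k,N}(Z)) \leq \risk_{k-1}(\risk_{k,N}(W))$, and since $Z_{k-1} = W_{k-1}$ the additive terms coincide, so $\risk_{k-1,N}(Z) \leq \risk_{k-1,N}(W)$. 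This is exactly the induction hypothesis with $k$ replaced by $k-1$: the equality constraint is now required only on $\{l, \ldots, k-2\}$, which still holds a fortiori. Iterating down to index $l$ produces $\risk_{l,N}(Z) \leq \risk_{l,N}(W)$, the desired conclusion.

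I expect the genuinely delicate points to be bookkeeping rather than deep. The main thing to get right is the measurability and integrability check that makes the recursion legitimate: one must verify that $\risk_{k+1,N}(Z_{k+1}, \ldots, Z_N) \in \cs_{k+1}$ so that monotonicity may be invoked and, implicitly, that translation invariance justifies pulling $Z_k$ outside $\risk_k$ in \eqref{eq:tcrisk}. This is precisely where the hypothesis $\risk_j : \cs_{j+1} \to \cs_j$ does the work. A second point worth stating explicitly is that only monotonicity and translation invariance of the one-step measures enter the time-consistency argument; convexity and positive homogeneity are not needed for it, but together with the other two they guarantee that each composite $\risk_{k,N}$ is itself coherent, so that the family qualifies as a bona fide dynamic risk measure. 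If a full proof were required, I would append a short lemma confirming this inheritance of the four coherence properties under composition, but the substance of the theorem is the backward-propagation argument above.
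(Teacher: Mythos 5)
Your argument is correct, but the comparison here is asymmetric: the paper does not prove Theorem \ref{thrm:tcc} at all. It states the result and attributes it to \cite{rus_09}, exactly as it later dispatches the representation theorems by pointing to Shapiro--Dentcheva--Ruszczynski. What you have reconstructed is essentially the standard argument from that cited source: a precise formalization of time consistency (equal costs on $\{l,\dots,k-1\}$ plus an inequality at stage $k$ must propagate to stage $l$), the one-step recursion $\risk_{k,N} = Z_k + \risk_k\bigl(\risk_{k+1,N}(Z_{k+1},\dots,Z_N)\bigr)$ read off from \eqref{eq:tcrisk}, and a backward induction in which only monotonicity of each one-step measure and the equality $Z_{k-1}=W_{k-1}$ are used. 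Your observation that convexity and positive homogeneity are irrelevant to the consistency step (they serve only to make each composite $\risk_{k,N}$ coherent, hence a bona fide dynamic risk measure) is accurate and matches the logic of the original proof. One small blemish: you say translation invariance is implicitly needed to ``pull $Z_k$ outside $\risk_k$,'' but in \eqref{eq:tcrisk} the stage cost $Z_k$ already sits outside the one-step measure, so the recursion is purely definitional; translation invariance matters only for the converse characterization (time consistency implies the nested form), which this theorem does not claim. In short, your proof supplies the self-contained argument that the paper delegates to the literature, and it is the right argument.
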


In this paper we consider a (slight) refinement of the concept of dynamic, time-consistent risk measure, which involves the addition of a Markovian structure \cite{rus_09}.
\begin{definition}[Markov dynamic risk measures]\label{def:Markov}
Let $\mathcal V:=L_p(S, \mathcal B, P)$ be the space of random variables on $S$ with finite $p$\emph{th} moment. Given a controlled Markov process $\{x_k\}$, a Markov dynamic risk measure is a dynamic, time-consistent risk measure if each coherent one-step risk measure $\risk_k:\cs_{k+1}\rightarrow \cs_k$ in equation \eqref{eq:tcrisk} can be written as:
\begin{equation}\label{eq:Markov}
\risk_k(V(x_{k+1})) = \sigma_k(V(x_{k+1}),x_k, Q(x_{k+1} |x_k, u_k)),
\end{equation}
for all $V(x_{k+1})\in \mathcal V$ and $u\in U(x_k)$, where $\sigma_k$ is a coherent one-step risk measure on $\mathcal V$ (with the additional technical property that for every $V(x_{k+1})\in \mathcal V$ and $u\in U(x_k)$ the function $x_k \mapsto \sigma_k(V(x_{k+1}), x_k, Q(x_{k+1}|x_k, u_k))$ is an element of $\mathcal V$).
\end{definition}
In other words, in a Markov dynamic risk measures, the evaluation of risk is not allowed to depend on the whole past.

\subsection{Stochastic optimal control with dynamic, time-consistent risk constraints}

Consider an MDP and let $c : S \times U \rightarrow \reals$ and $d : S \times U \rightarrow \reals$ be functions which denote costs associated with state-action pairs. Given a policy $\pi\in \Pi$, an initial state $x_0\in S$, and an horizon $N\geq 1$, the cost function is defined as
\[
J^{\pi}_N(x_0):=\expectation{\sum_{k=0}^{N-1}\, c(x_k, u_k)},
\]
and the risk constraint is defined as
\[
R^{\pi}_N(x_0):= \risk_{0,N}\Bigl(d(x_0,u_0), \ldots, d(x_{N-1},u_{N-1}),0\Bigr),
\]
where $\risk_{k,N}(\cdot)$, $k\in \{0,\ldots, N-1\}$, is a Markov dynamic risk measure (for simplicity, we do not consider terminal costs, even though their inclusion is straightforward).  The problem is then as follows:
\begin{quote} {\bf Optimization problem $\mathcal{OPT}$} --- Given an initial state $x_0\in S$, a time horizon $N\geq 1$, and a risk threshold $r_0 \in \reals$, solve
\begin{alignat*}{2}
\min_{\pi \in \Pi} & & \quad&J^{\pi}_N(x_0) \\
\text{subject to} & & \quad&R^{\pi}_N(x_0) \leq r_0.
\end{alignat*}
\end{quote}
If problem $\mathcal OPT$ is not feasible, we say that its value is $\infty$. In \cite{Chow_Pavone_13_1} the authors developed a dynamic programing approach to solve this problem. To define the value functions, one needs to define the tail subproblems. For a given $k\in \{0,\ldots,N-1\}$ and a given state $x_k\in S$, we define the \emph{sub-histories} as $h_{k,j}:=(x_k, u_k, \ldots,x_j)$ for $j\in \{k,\ldots, N\}$; also, we define the \emph{space of truncated policies} as $\Pi_k:=\Bigl \{ \{\pi_k, \pi_{k+1}, \ldots\} | \pi_j(h_{k,j}) \in U(x_j) \text{ for } j\geq k \Bigr\}$. For a given stage $k$ and state $x_k$, the cost of the tail process associated with a policy $\pi\in \Pi_k$ is simply $J^{\pi}_N(x_k):=\expectation{\sum_{j=k}^{N-1}\, c(x_j, u_j)}$. The risk associated with the tail process is:
\[
R^{\pi}_N(x_k):= \risk_{k,N}\Bigl(d(x_k,u_k), \ldots, d(x_{N-1},u_{N-1}),0\Bigr),
\]
The tail subproblems are then defined as
\begin{alignat}{2}
\min_{\pi \in \Pi_k} & & \quad&J^{\pi}_N(x_k) \label{problem_SOCP}\\
\text{subject to} & & \quad&R^{\pi}_N(x_k) \leq r_k(x_k),\label{constraint_SOCP}
\end{alignat}
for a given (undetermined) threshold value $r_k(x_k) \in \reals$ (i.e., the tail subproblems are specified up to a threshold value).

For each $k\in\{0,\ldots, N-1\}$ and $x_k\in S$, we define the set of feasible constraint thresholds as
\[
\Phi_k(x_k):=[\underline{R}_N(x_k), \overline{R}_{N,k}],\quad \Phi_N(x_N):=\{0\},
\]
where  $\underline{R}_N(x_k):=\min_{\pi\in \Pi_k} \, R_N^{\pi}(x_k)$,
and $\overline{R}_{N,k}= (N-k)\risk_{\text{max}}$. The value functions are then defined as follows:
\begin{itemize}
\item If $k<N$ and $r_k \in \Phi_k(x_k)$:
\begin{alignat*}{2}
V_k(x_k, r_k)  = &\min_{\pi \in \Pi_k} & \quad&J^{\pi}_N(x_k) \\
&\text{subject to} & &R^{\pi}_N(x_k) \leq r_k.
\end{alignat*}
\item iI $k\leq N$ and $r_k \notin \Phi_k(x_k)$:
\[
V_k(x_k, r_k)  = \infty;
\]
\item when $k=N$ and $r_N=0$:
\[
V_N(x_N,r_N) = 0.
\]
\end{itemize}

Let $B(S)$ denote the space of real-valued bounded functions on $S$, and $B(S \times \reals)$ denote the space of real-valued bounded functions on $S\times \reals$. For $k\in \{0, \ldots, N-1\}$, we define the dynamic programming operator $T_k[V_{k+1}] : B(S \times \reals) \mapsto B(S \times \reals)$ according to the equation:

\begin{equation}\label{eq:T}
\begin{split}
T_k[V_{k+1}]&(x_k, r_k) := \inf_{(u,r^{\prime})\in F_k(x_k, r_k)} \, \biggl\{c(x_k,u) \, \,+\\
& \ \sum_{x_{k+1}  \in S} \, Q(x_{k+1}|x_k,u)\, V_{k+1}(x_{k+1}, r^{\prime}(x_{k+1})) \biggr\},
\end{split}
\end{equation}
where $F_k$ is the set of control/threshold \emph{functions}:
\begin{equation*}
\begin{split}
F_k(x_k,& r_k):= \biggr\{(u, r^{\prime}) \Big | u\in U(x_k), r^{\prime}(x^{\prime}) \in \Phi_{k+1}(x^{\prime}) \text{ for}\\& \text{all } x^{\prime} \in S, \text{ and } d(x_k, u) + \risk_k(r^{\prime}(x_{k+1}))  \leq r_k\biggl\}.
\end{split}
\end{equation*}
If $F_k(x_k,r_k) = \emptyset$, then $T_k[V_{k+1}](x_k, r_k)=\infty$.

Note that, for a given state and threshold constraint, set $F_k$ characterizes the set of feasible pairs of actions and subsequent constraint
thresholds.

\begin{theorem}[Bellman's equation with risk constraints]\label{TC_good}
For all $k\in \{0, \ldots, N-1\}$ the optimal cost functions satisfy the Bellman's equation:
\[
V_k(x_k, r_k) = T_k[V_{k+1}](x_k, r_k).
\]
\end{theorem}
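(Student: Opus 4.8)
The plan is to establish the two inequalities $V_k(x_k,r_k)\le T_k[V_{k+1}](x_k,r_k)$ and $V_k(x_k,r_k)\ge T_k[V_{k+1}](x_k,r_k)$ separately, with the compositional structure of the dynamic risk measure as the central tool. The starting point is to record that both the cost and the risk of any tail policy $\pi\in\Pi_k$ decompose recursively over one step. For the cost this is immediate from linearity of expectation and the Markov transition:
\[
J^{\pi}_N(x_k) = c(x_k,u_k) + \sum_{x_{k+1}\in S} Q(x_{k+1}\mid x_k,u_k)\, J^{\pi}_N(x_{k+1}).
\]
For the risk, the nested form \eqref{eq:tcrisk} together with translation invariance of $\risk_k$ gives
\[
R^{\pi}_N(x_k) = d(x_k,u_k) + \risk_k\bigl(R^{\pi}_N(x_{k+1})\bigr),
\]
where $R^{\pi}_N(x_{k+1})$ denotes the random tail risk evaluated at the successor state. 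These two identities are precisely what link a stage-$k$ policy to the stage-$(k+1)$ value function.

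For the direction $V_k \ge T_k[V_{k+1}]$, I would take an arbitrary feasible $\pi\in\Pi_k$ (so $R^{\pi}_N(x_k)\le r_k$), let $u$ be its first action, and define the threshold function $r'(x_{k+1}) := R^{\pi}_N(x_{k+1})$. The risk decomposition then reads $d(x_k,u)+\risk_k(r'(x_{k+1}))=R^{\pi}_N(x_k)\le r_k$, and after checking that $r'(x_{k+1})\in\Phi_{k+1}(x_{k+1})$ for every successor — the lower end follows from $\underline{R}_N$ being the minimal tail risk, the upper end from the bound $\overline{R}_{N,k+1}=(N-k-1)\risk_{\text{max}}$ — we conclude $(u,r')\in F_k(x_k,r_k)$. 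Since the tail of $\pi$ is itself admissible for the subproblem at $x_{k+1}$ with threshold $r'(x_{k+1})$, we get $J^{\pi}_N(x_{k+1})\ge V_{k+1}(x_{k+1},r'(x_{k+1}))$; substituting into the cost decomposition and taking the infimum over feasible $\pi$ yields the desired lower bound.

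For the reverse direction $V_k \le T_k[V_{k+1}]$, I would fix any $(u,r')\in F_k(x_k,r_k)$ and, for each successor $x_{k+1}$, select an $\eps$-optimal admissible tail policy $\pi^{x_{k+1}}\in\Pi_{k+1}$ for the subproblem with threshold $r'(x_{k+1})$, so that $R^{\pi^{x_{k+1}}}_N(x_{k+1})\le r'(x_{k+1})$ and $J^{\pi^{x_{k+1}}}_N(x_{k+1})\le V_{k+1}(x_{k+1},r'(x_{k+1}))+\eps$. Splicing these with $u$ at stage $k$ produces a policy $\pi\in\Pi_k$ whose risk, by the decomposition and the \emph{monotonicity} of $\risk_k$ (the random argument is pointwise dominated by $r'$), satisfies $R^{\pi}_N(x_k)=d(x_k,u)+\risk_k(\,\cdot\,)\le d(x_k,u)+\risk_k(r')\le r_k$; hence $\pi$ is feasible and its cost lies within $\eps$ of $c(x_k,u)+\sum_{x_{k+1}}Q(x_{k+1}\mid x_k,u)\,V_{k+1}(x_{k+1},r'(x_{k+1}))$. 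Taking the infimum over $(u,r')\in F_k$ and sending $\eps\to 0$ closes the inequality; the infeasible cases ($F_k=\emptyset$ on one side versus no feasible $\pi$ on the other, both giving $+\infty$) are dispatched by the same feasibility correspondence.

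The step I expect to be the main obstacle is the reverse direction, specifically the rigorous splicing of the state-dependent tail policies $\pi^{x_{k+1}}$ into a single admissible $\pi\in\Pi_k$ and the verification that monotonicity of the one-step risk measure genuinely propagates the per-successor constraints $R^{\pi^{x_{k+1}}}_N(x_{k+1})\le r'(x_{k+1})$ into the single constraint $R^{\pi}_N(x_k)\le r_k$. This is exactly where the Markov property of the risk measure (Definition \ref{def:Markov}) is essential: without it, $\risk_k$ could depend on the entire past and the one-step decomposition $R^{\pi}_N(x_k)=d(x_k,u)+\risk_k(R^{\pi}_N(x_{k+1}))$ would fail, breaking the recursion. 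A secondary technical point is confirming that the tail risks always land in the interval $\Phi_{k+1}$, for which the explicit bound $(N-k-1)\risk_{\text{max}}$ must be shown to dominate every achievable tail risk.
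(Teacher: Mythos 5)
Your proposal is correct: the two-sided argument (constructing the threshold function $r'(x_{k+1}):=R^{\pi}_N(x_{k+1})$ from the tail risks of a feasible policy for one inequality, and splicing $\eps$-optimal per-successor tail policies, using monotonicity of $\risk_k$, for the other) is essentially the proof given for this theorem in the cited reference \cite{Chow_Pavone_13_1}; note that the present paper states Theorem \ref{TC_good} as imported background and contains no proof of its own. The only point worth tightening is the treatment of the degenerate thresholds: with the paper's convention that $V_k(x_k,r_k)=\infty$ whenever $r_k\notin\Phi_k(x_k)$, the equality should be read (as in the reference) for $r_k\in\Phi_k(x_k)$, since for $r_k>\overline{R}_{N,k}$ the set $F_k(x_k,r_k)$ can be non-empty while $V_k$ is declared infinite --- an artifact of the formulation rather than a flaw in your argument.
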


\subsection{Representation theorems}
A key result that will be heavily exploited in this paper is the following representation theorem for coherent one-step conditional risk measures.
\begin{theorem}\label{rep_thm_coherent}
$\risk_{k}:\cs_{k+1}\rightarrow\cs_k$ is a coherent one-step conditional risk measure if and only if
\begin{align}
\risk_{k}(Z(x_{k+1}))=\sup_{\xi\in\csd_{k+1}(x_k,Q(x_{k+1}|x_k,u_k))}\sum_{x^\prime\in S} \xi(x^\prime)Z(x^\prime)\label{dual_rep}
\end{align}
where
\[
\begin{split}
&\csd_{k+1}(x_k,Q)=\\
&\left\{\xi\in\mathcal{M}\mid\xi\ll Q,\,\sum_{x^\prime\in S} \xi(x^\prime)Z(x^\prime)\leq\risk_{k}(Z),\, \forall Z\in\cs_{k+1}\right\},
\end{split}
\]
and
\[
\mathcal{M}=\left\{\xi\in \reals^{|S|}|\sum_{x^\prime\in S}\xi(x^\prime)=1, \,\,\xi(x^\prime)\geq 0,\,\forall x^\prime\in S \right\}.
\]
For $Z\in\cs_k\subseteq L_p(\Omega,\fil,P)$, $\csd_{k+1}(x_k,Q)$ is a subset of $L_q(\Omega,\fil,P)$, where $L_q(\Omega,\fil,P)$ is the dual space of $L_p(\Omega,\fil,P)$, for $1/p+1/q=1$ and $p,q\in[1,\infty]$.
\end{theorem}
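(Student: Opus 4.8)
The plan is to prove both implications, treating the ``if'' direction as a routine verification and concentrating the real work on the ``only if'' direction via convex duality. A crucial simplification is that $S$ is finite, so for a fixed $x_k$ the Markov structure of Definition~\ref{def:Markov} lets me regard $\risk_k$ as a finite-valued functional on the finite-dimensional space $\reals^{|S|}$, with the pairing $\langle \xi, Z\rangle = \sum_{x'\in S}\xi(x')Z(x')$. This sidesteps the delicate weak$^*$-compactness and lower-semicontinuity issues that arise in infinite dimensions: a finite-valued convex function on $\reals^{|S|}$ is automatically continuous, so the Fenchel--Moreau biconjugate theorem applies without further hypotheses.

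For sufficiency, I would suppose $\risk_k$ is given by the supremum \eqref{dual_rep} over $\csd_{k+1}(x_k,Q)$ and check the four axioms directly. Each map $Z \mapsto \langle \xi, Z\rangle$ is linear, so their pointwise supremum is convex, giving convexity. Since every $\xi \in \csd_{k+1}$ satisfies $\xi \ge 0$, $Z \le W$ implies $\langle\xi,Z\rangle \le \langle\xi,W\rangle$ for each $\xi$ and hence monotonicity. Because $\sum_{x'}\xi(x') = 1$, adding an $\fil_k$-measurable $Z$ passes through the pairing as an additive constant, yielding translation invariance; and $\langle \xi, \lambda Z\rangle = \lambda\langle\xi,Z\rangle$ for $\lambda \ge 0$ gives positive homogeneity. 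None of these steps should present difficulty.

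The substance is the necessity direction. Starting from a coherent $\risk_k$, I would form its Fenchel conjugate $\risk_k^*(\xi) := \sup_{Z}\{\langle\xi,Z\rangle - \risk_k(Z)\}$. Positive homogeneity of $\risk_k$ forces $\risk_k^*$ to take only the values $0$ and $+\infty$, so it is the indicator of the closed convex set $A := \{\xi : \langle\xi,Z\rangle \le \risk_k(Z)\ \text{for all } Z\}$. Continuity of $\risk_k$ (from finite-dimensionality and convexity) then gives $\risk_k = \risk_k^{**}$, i.e. $\risk_k(Z) = \sup_{\xi\in A}\langle\xi,Z\rangle$, which is the desired supremum form once I identify $A$ with $\csd_{k+1}(x_k,Q)$.

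That identification is the crux, and I would extract it from the remaining three axioms. Monotonicity forces $\xi \ge 0$ for every $\xi \in A$: if some coordinate $\xi(x_0') < 0$, then decreasing $Z$ only at $x_0'$ lowers $\risk_k(Z)$ while raising $\langle\xi,Z\rangle$, eventually violating the defining inequality of $A$. Translation invariance forces $\sum_{x'}\xi(x') = 1$: testing the inequality $\langle\xi, Z + c\rangle \le \risk_k(Z) + c$ against $c \to \pm\infty$ rules out $\sum_{x'}\xi(x') \ne 1$. Finally, the dependence of $\risk_k$ on $Q$ through the Markov representation yields the absolute continuity $\xi \ll Q$: since $\risk_k$ is insensitive to the values of $Z$ off the support of $Q$, placing positive mass $\xi(x_0')>0$ where $Q(x_0')=0$ and sending $Z(x_0')\to\infty$ again breaks the domination inequality. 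These three facts show $A \subseteq \csd_{k+1}$, while the reverse inclusion is immediate from the definition of $\csd_{k+1}$, so $A = \csd_{k+1}(x_k,Q)$. The $L_q$ statement is then just the standard identification of the pairing with $L_p$--$L_q$ duality under $1/p+1/q=1$, which is automatic here since $S$ is finite. I expect the absolute-continuity step to be the most delicate point, as it is the only one requiring the $Q$-relevance of $\risk_k$ rather than a purely order-theoretic property.
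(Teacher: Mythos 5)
Your argument is sound, but note that the paper itself does not actually prove this theorem: its ``proof'' is a one-line pointer to Theorem 6.4 of Shapiro--Dentcheva--Ruszczy\'nski, where the result is established in the general paired-space setting ($\cs_{k+1}=L_p$, dual set in $L_q$), with the lower-semicontinuity and weak$^\ast$-compactness machinery that the paper quotes separately as Theorem \ref{rep_thm_2}. Your route is therefore genuinely different and more self-contained: by fixing $x_k$ and exploiting the finiteness of $S$, you reduce $\risk_k$ to a finite-valued convex functional on $\reals^{|S|}$, where continuity (hence lower semicontinuity) is automatic, so Fenchel--Moreau biconjugation applies with no topological hypotheses, and the four axioms are then converted one-by-one into the defining properties of the dual set $A$ --- exactly the classical finite-probability-space argument of Artzner et al.\ and F\"ollmer--Schied. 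What the paper's citation buys is generality (genuinely infinite-dimensional $\cs_{k+1}$, arbitrary $p$) and brevity; what your proof buys is verifiability entirely within the paper's own finite-state standing assumptions, which is arguably more appropriate since the rest of the paper only ever uses the finite-dimensional case. One point you should make explicit when writing this up: the claim that $\risk_k$ is ``insensitive to the values of $Z$ off the support of $Q$'' is not literally one of the four coherence axioms. It follows from the fact that $\cs_{k+1}$ consists of $P$-equivalence classes of random variables together with the Markov representation \eqref{eq:Markov}: two functions on $S$ that agree $Q(\cdot|x_k,u_k)$-a.s.\ define the same conditional random variable at $(x_k,u_k)$, hence receive the same value of $\risk_k$ there. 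With that locality property stated, your $\xi\ll Q$ step, and therefore the identification $A=\csd_{k+1}(x_k,Q)$, is complete; without it, the necessity direction only yields the subgradient set $\partial\risk_k(0)$ with no absolute-continuity information.
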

\begin{proof}
Refer to Theorem 6.4 in \cite{Shapiro_Dentcheva_Ruszczynski_09} and references therein.
\end{proof}
The result essentially says that any coherent risk measure can be interpreted as an expectation taken with
respect to a worst-case measure, which is chosen from a suitable set of test measures \cite{Iancu_11}.

Furthermore, by Moreau-Rockafellar Theorem  (Theorem 7.4 in \cite{Shapiro_Dentcheva_Ruszczynski_09}), it implies $\csd_{k+1}(x_k,Q(x_{k+1}|x_k,u_k))=\partial\risk_k(0)$, when the transition probability kernel is $Q(x_{k+1}|x_k,u_k)$. The next Theorem implies a basic duality result on coherent risk measures.
\begin{theorem}\label{rep_thm_2}
$\risk_{k}:\cs_{k+1}\rightarrow\cs_k$ is a coherent one-step conditional risk measure, if and only if there exists a bounded, non-empty, weakly$^\ast$ compact and convex set: $\csd_{k+1}(x_k,Q(x_{k+1}|x_k,u_k))$ such that equation (\ref{dual_rep}) holds. Furthermore, if $\risk_k$ is a coherent risk measure, then it is continuous and sub-differentiable in $\cs_{k+1}$, also if $\text{dom}(\risk_k)=\{Z\in\cs_{k+1}:\risk_k(Z)<\infty\}$ has an non-empty interior, then $\risk_k $ is finite valued.
\end{theorem}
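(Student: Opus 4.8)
The plan is to treat $\risk_k$ as a sublinear functional and invoke the conjugate-duality theory of support functions; the set $\csd_{k+1}(x_k,Q)$ is nothing but the subdifferential $\partial\risk_k(0)$, as already anticipated after Theorem~\ref{rep_thm_coherent}. The equivalence stated in the first sentence is essentially the substance of Theorem~\ref{rep_thm_coherent}, so I would concentrate on (i) extracting the topological properties of $\csd_{k+1}$, and (ii) the continuity, subdifferentiability, and finite-valuedness claims. For the ``if'' direction I would verify the four axioms directly from \eqref{dual_rep}: a pointwise supremum of the linear maps $Z\mapsto\sum_{x'}\xi(x')Z(x')$ is convex and positively homogeneous; monotonicity follows from $\xi(x')\ge 0$; and translation invariance follows from the normalization $\sum_{x'}\xi(x')=1$ together with the $\fil_k$-measurability of the added term. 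For the ``only if'' direction I would note that convexity plus positive homogeneity make $\risk_k$ sublinear, hence equal to the support function of the closed convex set $\partial\risk_k(0)$ by Fenchel--Moreau, while monotonicity and normalization force $\partial\risk_k(0)\subseteq\mathcal M$, which is exactly the defining description of $\csd_{k+1}$.

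Next I would establish the geometry of $\csd_{k+1}$. Convexity is automatic, since it is an intersection of the half-spaces $\{\xi:\sum_{x'}\xi(x')Z(x')\le\risk_k(Z)\}$ with the simplex $\mathcal M$. Because $\csd_{k+1}\subseteq\mathcal M$ it consists of probability densities and is therefore norm-bounded in $L_q$. Each defining half-space is weakly$^\ast$ closed, as it is a sublevel set of the weakly$^\ast$-continuous functional $\xi\mapsto\sum_{x'}\xi(x')Z(x')$ obtained by pairing with the fixed element $Z$ of the predual; hence $\csd_{k+1}$ is weakly$^\ast$ closed, and a bounded, weakly$^\ast$-closed subset of the dual space is weakly$^\ast$ compact by Banach--Alaoglu. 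Non-emptiness follows from the separating-hyperplane argument underlying the support-function representation.

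Continuity and subdifferentiability I would obtain from standard convex analysis: a convex functional that is finite on an open subset of a Banach space is locally Lipschitz---hence continuous---on the interior of its domain, and a convex functional continuous at a point is subdifferentiable there (Moreau--Rockafellar).

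The step I expect to be the main obstacle is the finite-valuedness claim, since it is the only place where the four axioms must be combined with the interior hypothesis. My argument would exploit three structural consequences: positive homogeneity makes $\text{dom}(\risk_k)$ a convex cone; monotonicity makes it downward closed for the a.s.\ order (if $Y\le Y'$ and $Y'\in\text{dom}(\risk_k)$ then $\risk_k(Y)\le\risk_k(Y')<\infty$); and translation invariance by constants lets me shift any interior point $Z_0$ upward to $Z_0+c$, whose negative part has $L_p$-norm tending to $0$ as $c\to+\infty$ by dominated convergence. Given an arbitrary $W$, I would pick $c$ so that $\|(Z_0+c)^-\|_p$ is smaller than the radius $\delta$ of a ball contained in $\text{dom}(\risk_k)$, then choose $\lambda$ large enough that $V:=(W/\lambda-(Z_0+c))^+$ satisfies $\|V\|_p<\delta$; since $Z_0+c+V\in\text{dom}(\risk_k)$ dominates $W/\lambda$, downward closure and the cone property give $W\in\text{dom}(\risk_k)$. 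Making the negative-part estimate uniform is the delicate part; everything else is bookkeeping, and in the Markov (finite-$S$) setting of this paper it collapses entirely, because $\cs_{k+1}$ is finite-dimensional, weak$^\ast$ compactness reduces to closedness and boundedness, and finite convex functions are automatically continuous.
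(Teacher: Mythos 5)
Your attempt is necessarily on a different footing from the paper's, because the paper does not actually prove Theorem~\ref{rep_thm_2}: its ``proof'' is a pointer to Proposition~6.5, Theorem~6.6 and Theorem~6.7 of \cite{Shapiro_Dentcheva_Ruszczynski_09}. What you have written is, in effect, a reconstruction of the proofs of those cited results, and the skeleton is the right one: verify the four axioms directly from \eqref{dual_rep} for the ``if'' direction; for the ``only if'' direction identify $\csd_{k+1}(x_k,Q)$ with $\partial\risk_k(0)$ via sublinearity and conjugate duality; obtain convexity and weak$^\ast$ compactness of $\csd_{k+1}$ from its half-space description plus Banach--Alaoglu; and derive finite-valuedness from the interior hypothesis using the cone structure of $\text{dom}(\risk_k)$, downward closure under the almost-sure order, and translation invariance. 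Your finite-valuedness argument in particular is sound (for $p<\infty$; for $p=\infty$ one simply takes $c\geq\|Z_0\|_\infty$, which kills the negative part outright), and it is the same mechanism the reference uses.

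There is, however, one genuine flaw if the theorem is read in the generality in which it is stated. The lemma you invoke for continuity --- ``a convex functional finite on an open subset of a Banach space is locally Lipschitz, hence continuous, on the interior of its domain'' --- is false in infinite dimensions: a discontinuous linear functional is convex and finite everywhere yet nowhere continuous. What rescues the argument is the axiom you did not use at this step, monotonicity: the extended Namioka--Klee theorem (which is precisely the content of Proposition~6.5 in \cite{Shapiro_Dentcheva_Ruszczynski_09}) asserts that a proper convex \emph{monotone} function on a Banach lattice such as $L_p$ is continuous and subdifferentiable on the interior of its domain. This also matters for the logical ordering of your proof: Fenchel--Moreau in the ``only if'' direction requires $\risk_k$ to be lower semicontinuous, which you only obtain from the continuity step you placed afterwards, so continuity-via-monotonicity must come first. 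Both issues evaporate in the setting this paper actually works in --- $S$ finite, hence $\cs_{k+1}$ finite-dimensional, finite convex functions automatically locally Lipschitz, and weak$^\ast$ compactness reducible to closed-and-bounded --- as you correctly observe in your final sentence; so your proof suffices for the theorem as it is used here, but to prove it as stated you should replace the false lemma by the Namioka--Klee argument.
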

\begin{proof}
See Proposition 6.5, Theorem 6.6 and Theorem 6.7 in \cite{Shapiro_Dentcheva_Ruszczynski_09}.
\end{proof}
Since the analysis of this paper is restricted to finite state and action spaces, from this theorem, $\csd_{k+1}(x_k,Q(x_{k+1}|x_k,u_k))=\partial\risk_k(0)$ is a non-empty, convex, bounded and compact set in $\reals^{|S|}$. By extreme value theorem, the supremum in equation (\ref{dual_rep}) is attained.

\section{Discretization of the continuous risk thresholds in constrained dynamic programming}\label{sec:dis}
In the previous section, we have shown that the constrained stochastic optimal control problem can be solved using value iteration (See Theorem \ref{TC_good}). However, the constant risk threshold $r_k$ in value function $V_{k}(x_k,r_k)$, $k\in\{0,\ldots,N-1\}$ is a continuous state. This results in numerical complexity when value iteration is performed. Therefore, in this section, we consider a numerical approximation algorithm using discretization. First of all, from the dynamic programming operator possess several nice properties:
\begin{lemma}\label{prop_DP_op}
Let $V,\tilde{V}\in B(S \times \reals)$ be real-valued bounded functions and $T_k[V] : B(S \times \reals) \mapsto B(S \times \reals)$ be a dynamic programming operator in given in $B(S \times \reals)$ whose expression is given by equation (\ref{eq:T}) for any $k\in\{0,\ldots,N-1\}$. Then, the following statement holds:
\begin{enumerate}
\item \textbf{Monotonicity:} For any $(x,r)\in B(S\times\reals)$, if $V\leq \tilde V$, then $T_k[V](x,r)\leq T_k[\tilde{V}](x,r)$.
\item \textbf{Constant shift:} For any real number $L$ and $(x,r)\in B(S\times\reals)$, $T_k[V+L](x,r)=T_k[V](x,r)+L$, where $(V+L)(x,r):=V(x,r)+L$, $\forall\, (x,r)\in B(S\times\reals)$.
\item \textbf{Non-expansivity:} For all $V,\tilde{V}\in B(S \times \reals)$, $\|T_k[V]-T_k[\tilde{V}]\|_{\infty}\leq \|V-\tilde{V}\|_\infty$,
where $\|\cdot\|_\infty$ is the infinity norm of a function.
\end{enumerate}
\end{lemma}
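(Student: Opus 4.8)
The plan is to establish the three properties in the order monotonicity, constant shift, non-expansivity, exploiting the single structural fact that the feasible set $F_k(x_k,r_k)$ appearing in \eqref{eq:T} depends only on the current state-threshold pair $(x_k,r_k)$ and is entirely independent of the value function being operated on. This observation is what reduces all three statements to elementary manipulations of an infimum over a \emph{fixed} constraint set, and it is the reason no regularity assumption on $V$ beyond boundedness is needed.

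For monotonicity I would fix $(x,r)$ and split into two cases. If $F_k(x,r)=\emptyset$, then both $T_k[V](x,r)$ and $T_k[\tilde V](x,r)$ equal $\infty$ by definition and the inequality is immediate. Otherwise, for each feasible pair $(u,r')\in F_k(x,r)$, since $Q(\cdot|x,u)\ge 0$ and $V\le\tilde V$ pointwise, the inner sum obeys $\sum_{x'} Q(x'|x,u)\,V(x',r'(x'))\le\sum_{x'} Q(x'|x,u)\,\tilde V(x',r'(x'))$. Adding the common stage cost $c(x,u)$ and taking the infimum over the \emph{same} set $F_k(x,r)$ preserves the inequality, giving $T_k[V](x,r)\le T_k[\tilde V](x,r)$.

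For the constant shift, the key is that $Q(\cdot|x,u)$ is a probability distribution, so $\sum_{x'} Q(x'|x,u)=1$. Hence for any feasible $(u,r')$ the objective evaluated at $V+L$ equals the objective at $V$ plus $L\sum_{x'} Q(x'|x,u)=L$; since this additive constant is uniform over all of $F_k(x,r)$ it passes unchanged through the infimum, yielding $T_k[V+L](x,r)=T_k[V](x,r)+L$, and the empty-feasible-set case reduces to $\infty+L=\infty$.

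Finally, I would derive non-expansivity as a corollary of the previous two properties rather than from a fresh argument. Writing $\delta:=\|V-\tilde V\|_\infty$, we have $V\le\tilde V+\delta$ pointwise, so monotonicity and constant shift give $T_k[V]\le T_k[\tilde V+\delta]=T_k[\tilde V]+\delta$; by symmetry $T_k[\tilde V]\le T_k[V]+\delta$. Combining these pointwise bounds and taking the supremum over $(x,r)$ yields $\|T_k[V]-T_k[\tilde V]\|_\infty\le\delta$. The only point deserving care is the treatment of infeasible $(x,r)$: precisely because $F_k(x,r)$ does not depend on the value function, $T_k[V](x,r)$ and $T_k[\tilde V](x,r)$ are simultaneously $+\infty$ or simultaneously finite, so no ill-defined difference $\infty-\infty$ arises and the estimate is meaningful exactly where it is needed. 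I expect this bookkeeping over the $\infty$ values to be the only genuine obstacle; the algebraic core of each property is routine once the value-independence of $F_k$ has been isolated.
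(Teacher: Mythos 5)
Your proposal is correct, and in fact the paper states this lemma without giving any proof at all, treating it as a routine adaptation of the classical unconstrained dynamic-programming properties (cf.\ the monotonicity/constant-shift/non-expansivity arguments in \cite{bertsekas_05}). Your argument is precisely the canonical one that fills this gap: the single structural observation that $F_k(x,r)$ is independent of the value function makes the three properties elementary, your derivation of non-expansivity from monotonicity plus constant shift is the standard route, and your handling of the simultaneous-$\infty$ case at infeasible pairs $(x,r)$ is the right bookkeeping.
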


Next, we introduce the method for constrained dynamic programming with discretized risk thresholds and updates.
\subsection{Dynamic programming with discretized risk thresholds and updates}
For $k\in\{0,\ldots,N-1\}$, we will partition $\Phi_k(x_k)$ into $t+1$ partitions using $t$ grid points: $\{\hat{r}_k^{(1)},\ldots,\hat{r}_k^{(t)}\}$ for every fixed $x_k\in S$. The step size of discretization of the risk thresholds $r_k$ is $\Delta$. For $\tau\in\{0,\ldots,t\}$, define the discretized region $\Phi^{(\tau)}_k(x_k)=[r_k^{(\tau)},r_k^{(\tau+1)})$, where $r_k^{(0)}=\underline R_N(x_k)$ and $r_k^{(t+1)}=\overline{R}_{N,k}+\epsilon$, for arbitrarily small $\epsilon>0$. We also define $\overline{\Phi}_k(x_k)=\{{r}_k^{(0)},\ldots,{r}_k^{(t+1)}\}$ to be a finite state of risk threshold at step $k$. Let $\tau\in\{0,\ldots,t\}$ such that $r_k\in\Phi^{(\tau)}_k(x_k)$. Now, define the approximation operator $T^{D}_{\Delta,k}$ for $x_k\in S$, $r_k\in \Phi^{(\tau)}_k(x_k)$:
\begin{equation}
T^{D}_{\Delta,k}[V](x_k, r_k) := \bar{T}^{D}_{\Delta,k}[V](x_k, r^{(\tau)}_k)\label{dis_DP_1}
\end{equation}
where
\begin{equation}
\begin{split}
\bar{T}^{D}_{\Delta,k}[V](x_k, r_k):=&\min_{(u,r^{D,\prime})\in F^D_k\left(x_k, r_k\right)} \, \biggl\{ c(x_k,u) \\ &+\sum_{x^{\prime}  \in S} \, Q(x^{\prime}|x_k,u) \,V(x^{\prime}, r^{D,\prime}(x^{\prime})) \biggr\},\label{dis_DP_2}
\end{split}
\end{equation}
where $F^D_k$ is the set of control/threshold \emph{functions}:
\begin{equation*}
\begin{split}
F^D_k(x_k,&r_k):= \bigg\{(u, r^{\prime}) \bigg |
u\in U(x_k), \,\, r^{D,\prime}(x^{\prime}) \in \overline{\Phi}_{k+1}(x^\prime), \\
&\forall x^{\prime} \in S, \,\, d(x_k, u)+\risk_k( r^{D,\prime}(x_{k+1})) \leq r_k
\bigg\}.
\end{split}
\end{equation*}
If $F^D_k(x_k,r_k) = \emptyset$, then $\bar{T}^{D}_{\Delta,k}[V_{k+1}](x_k, r_k)=\infty$.

By construction, we can see the set of optimal solution of $T^{D}_{\Delta,k}[V](x_k, r_k)$ is a subset of feasible space for the problem described by $T_k[V](x_k, r_k)$ (since $F^D_k(x_k,r_k)\subseteq F_k(x_k,r_k)$ and $r^{(\tau)}_k\leq r_k$). Because the solution of $T^{D}_{\Delta,k}[V](x_k, r_k)$ is an infimum over a finite set, the problem in (\ref{dis_DP_1}) is a minimization. Also, based on similar proofs, the dynamic programming operator $T^{D}_{\Delta,k}[V]$ satisfies all the properties given in Lemma \ref{prop_DP_op}. The main result of this section is to obtain a bound of the differences between  $T_k[V](x_k, r_k)$ and $T^{D}_{\Delta,k}[V](x_k, r_k)$, which will be given in the next subsection.

\subsection{Error bound analysis }
First, we have the following assumptions for the following analysis:
\begin{quote} {\textbf{Assumptions for discretization analysis:}}
\begin{enumerate}
\item \label{assume_lip} There exists $M_{c},M_{d}>0$ such that
\[
|c(x,u)-c({x},\tilde{u})|\leq M_{c}|u-\tilde{u}|,
\]
\[
|d(x,u)-d({x},\tilde{u})|\leq M_{d}|u-\tilde{u}|,
\]
for any $x\in S$, $u,\tilde{u}\in U(x)$.
\item \label{assume_Q} For any $u,\tilde{u}\in U(x_k)$, there exists $M_q>0$ such that \label{assume_Q}
\[
\sum_{x^\prime\in S}\left|Q(x^\prime|x_k,u)-Q(x^\prime|x_k,\tilde{u})\right|\leq M_{q}|u-\tilde{u}|.
\]
\end{enumerate}
\end{quote}
Assumptions (\ref{assume_lip}) to  (\ref{assume_Q}) are the critical assumptions required to perform error bound analysis in this section. First, we have following Proposition showing the Lipschsitz-ness  of set-valued mapping $\mathcal{U}_{k+1}(x_k,Q)$.
\begin{proposition}\label{prop_lip_xi}
For any $\xi\in\mathcal{U}_{k+1}(x_k,Q)$, there exists a ${M}_\xi>0$ such that for some $\tilde{\xi}\in\mathcal{U}_{k+1}(x_k,\tilde{Q})$,
\[
\sum_{x^\prime\in S}|\xi(x^\prime)-\tilde{\xi}(x^\prime)|\leq M_{\xi}\sum_{x^\prime\in S}\left|Q(x^\prime)-\tilde{Q}(x^\prime)\right|.
\]
\end{proposition}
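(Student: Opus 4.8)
The plan is to read the statement as a one-sided Lipschitz (Aubin-type) property of the set-valued map $Q \mapsto \mathcal{U}_{k+1}(x_k,Q)$, measured in the $\ell_1$ norm on $\reals^{|S|}$, and to derive it from the stability theory of polyhedral multifunctions. By Theorem \ref{rep_thm_coherent} and Theorem \ref{rep_thm_2}, for each fixed $Q$ the set $\mathcal{U}_{k+1}(x_k,Q)=\partial\risk_k(0)$ is a nonempty, convex, compact polytope contained in the simplex $\mathcal{M}\subset\reals^{|S|}$, cut out by the normalization $\sum_{x'}\xi(x')=1$, the nonnegativity constraints $\xi(x')\ge 0$, the support (absolute-continuity) constraints $\xi(x')=0$ whenever $Q(x')=0$, and the family of dominance inequalities $\sum_{x'}\xi(x')Z(x')\le\risk_k(Z)$ for $Z\in\cs_{k+1}$. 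The goal then reduces to showing that if $\xi$ is feasible for the data induced by $Q$, then some point feasible for the data induced by $\tilde Q$ lies within $\ell_1$-distance $M_\xi\sum_{x'}|Q(x')-\tilde Q(x')|$.

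First I would put the constraint system into the form $\{\xi : A\xi\le b(Q),\,\xi\ge 0\}$ with a matrix $A$ that does \emph{not} depend on $Q$, so that the only $Q$-dependence sits in the right-hand side $b(Q)$ and in the coordinate support. The main tool is then the Walkup--Wets/Hoffman theorem on Lipschitz continuity of polyhedral solution maps: for fixed $A$ there is a constant $L=L(A)$ such that $\{A\xi\le b,\xi\ge0\}\subseteq\{A\xi\le \tilde b,\xi\ge0\}+L\|b-\tilde b\|\,B$ (with $B$ the unit ball) on the region where both sets are nonempty. Applying this pointwise converts a bound on $\|b(Q)-b(\tilde Q)\|$ into the required pointwise bound on $\sum_{x'}|\xi-\tilde\xi|$.

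Two pieces of $Q$-dependence must then be estimated. The dominance inequalities carry the right-hand side $\risk_k(Z)=\sigma_k(Z,x_k,Q)$, and I would show this is $\ell_1$-Lipschitz in $Q$ by invoking the dual representation of the coherent measure $\sigma_k$ itself (a supremum of linear functionals over a bounded test set), which yields $|\sigma_k(Z,x_k,Q)-\sigma_k(Z,x_k,\tilde Q)|\le \kappa\,\|Z\|_\infty\sum_{x'}|Q(x')-\tilde Q(x')|$; since only finitely many of these inequalities can bind at a vertex, this controls $\|b(Q)-b(\tilde Q)\|$ up to a dimensional constant. The delicate piece is the support constraint: passing from $Q$ to $\tilde Q$ can forbid coordinates that were previously allowed, so it is not a pure right-hand-side perturbation but a change of the active coordinate subspace. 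Here I would project $\xi$ onto the coordinates $\{x':\tilde Q(x')>0\}$ and redistribute the deleted mass onto the surviving support; the deleted mass equals $\sum_{x':\tilde Q(x')=0}\xi(x')$, which is controlled because each such coordinate contributes $|Q(x')-\tilde Q(x')|=Q(x')$ to the total-variation budget while $\xi\ll Q$ keeps $\xi(x')$ commensurate with $Q(x')$.

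The main obstacle is exactly this interaction between the support change and re-feasibilization: after projecting and renormalizing one must simultaneously restore the dominance inequalities for the perturbed measure while keeping the total displacement $O(\sum_{x'}|Q(x')-\tilde Q(x')|)$, which is where the Hoffman modulus $L(A)$ and the Lipschitz constant of $\sigma_k$ combine into the single constant $M_\xi$. I expect that under the natural regularity assumption that $Q$ and $\tilde Q$ share support (e.g.\ strictly positive kernels, which can be arranged in the discretization) the support issue disappears entirely and the estimate collapses to the clean Hoffman bound with $M_\xi$ depending only on $|S|$ and the risk-measure data, while the general case is handled by the redistribution argument above.
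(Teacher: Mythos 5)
Your overall strategy --- reading the claim as Aubin/Hausdorff Lipschitz continuity of the multifunction $Q\mapsto\mathcal{U}_{k+1}(x_k,Q)$ and deriving it from stability of the defining constraint system --- points in a reasonable direction, but the concrete machinery you chose fails at the generality of the proposition, and two of your estimates are flawed. First, the Hoffman/Walkup--Wets theorem requires a polyhedral system $\{\xi: A\xi\le b,\ \xi\ge0\}$ with a \emph{fixed, finite} matrix $A$. Here the dominance constraints $\sum_{x^\prime}\xi(x^\prime)Z(x^\prime)\le\risk_k(Z)$ are indexed by all $Z\in\cs_{k+1}$, a continuum, and for a general coherent one-step measure the envelope $\partial\risk_k(0)$ is \emph{not} a polytope: the mean-semideviation measure used in this paper's own numerical section, $\risk_k(V)=\expectation{V}+0.2\,\Bigl(\expectation{[V-\expectation{V}]_{+}^2}\Bigr)^{1/2}$, has an envelope cut out by an $L_2$-ball condition, with curved boundary, so there are no ``vertices'' and no uniform Hoffman constant. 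The core tool of your argument is therefore unavailable precisely in the cases the proposition must cover. Second, your estimate $|\sigma_k(Z,x_k,Q)-\sigma_k(Z,x_k,\tilde Q)|\le \kappa\,\|Z\|_\infty\sum_{x^\prime}|Q(x^\prime)-\tilde Q(x^\prime)|$ is circular: $\sigma_k(Z,x_k,Q)=\sup_{\xi\in\mathcal{U}_{k+1}(x_k,Q)}\sum_{x^\prime}\xi(x^\prime)Z(x^\prime)$ is the support function of the very set under study, and for compact convex sets the Hausdorff distance equals the uniform distance of support functions over the dual unit ball; so this Lipschitz bound \emph{is} the conclusion of the proposition, not an ingredient you may assume. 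Third, in the support-change step you claim that $\xi\ll Q$ ``keeps $\xi(x^\prime)$ commensurate with $Q(x^\prime)$''; in a finite space absolute continuity is only the implication $Q(x^\prime)=0\Rightarrow\xi(x^\prime)=0$, so $\xi(x^\prime)$ can be large while $Q(x^\prime)$ is arbitrarily small, and the mass you must delete and redistribute is not controlled by the total-variation budget $\sum_{x^\prime}|Q(x^\prime)-\tilde Q(x^\prime)|$.

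The paper's proof avoids all three difficulties: it notes that $\mathcal{U}_{k+1}(x_k,\cdot)$ is a convex, compact-valued multifunction with closed graph, that any transition kernel $Q$ lies in the interior of its domain, and then invokes a general Lipschitz-continuity theorem for such set-valued maps (Theorem 2.7 in \cite{nam_13}) to obtain the Hausdorff estimate in one stroke; attainment of the infimum then follows from convexity and compactness of $\mathcal{U}_{k+1}(x_k,\tilde Q)$. Your route could be repaired only under extra structure --- a genuinely polyhedral envelope (CVaR-type measures) described by finitely many inequalities, together with kernels of common support --- which is strictly narrower than the statement. To argue at full generality you need exactly the Robinson-type result on Lipschitz continuity of convex multifunctions at interior points of their domain, i.e., the theorem the paper cites.
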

\begin{proof}
From Theorem \ref{rep_thm_2}, we know that $\mathcal{U}_{k+1}(x_k,{Q})$ is a closed, bounded,  convex set of probability mass functions. Since any conditional probability mass function $Q$ is in the interior of $\text{dom}(\mathcal{U}_{k+1})$ and the graph of $\mathcal{U}_{k+1}(x_k,{Q})$ is closed, by Theorem 2.7 in \cite{nam_13}, $\mathcal{U}_{k+1}(x_k,{Q})$ is a Lipschitz set-valued mapping with respect to the Hausdorff distance.
Thus, for any $\xi\in\mathcal{U}_{k+1}(x_k,{Q})$, the following expression holds for some ${M}_\xi>0$:
\begin{equation*}
\inf_{\hat{\xi}\in\mathcal{U}_{k+1}(x_k,\tilde{Q})}\sum_{x^\prime\in S}|\xi(x^\prime)-\hat{\xi}(x^\prime)|\leq {M}_\xi\sum_{x^\prime\in S}\left|Q(x^\prime)-\tilde{Q}(x^\prime)\right|.
\end{equation*}
Next, we want to show that the infimum of the left side is attained. Since the objective function is convex, and $\mathcal{U}_{k+1}(x_k,\tilde{Q})$ is a convex compact set, there exists $\tilde{\xi}\in\mathcal{U}_{k+1}(x_k,\tilde{Q})$ such that infimum is attained.
\end{proof}
Next, we provide a Lemma that characterizes an upper bound for the magnitude of the value functions.
\begin{lemma}\label{V_bdd}
For $k\in\{0,\ldots,N-1\}$, the following bound is given for the value function $V_k(x_k,r_k)$: $\|V_{k}\|_\infty\leq (N-k)c_{\max}$, where
\begin{alignat}{1}
&c_{\text{max}}:=\max_{(x,u)\in S\times U} \, |c(x,u))|.\label{c_max}
\end{alignat}
\end{lemma}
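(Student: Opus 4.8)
The plan is to bound $V_k(x_k,r_k)$ directly from its definition as the optimal value of a constrained minimization, exploiting the fact that the tail cost $J^{\pi}_N(x_k)$ is an expectation of a sum of exactly $N-k$ stage costs, each of magnitude at most $c_{\max}$. First I would restrict attention to the region where $V_k$ is finite, namely $r_k\in\Phi_k(x_k)$; on the complement $V_k=\infty$ by definition, so the infinity norm is understood over the effective (feasible) domain of $V_k$.

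For any admissible truncated policy $\pi\in\Pi_k$, I would apply the elementary inequalities $|\mathbb{E}[Y]|\le\mathbb{E}[|Y|]$ and $\bigl|\sum c(x_j,u_j)\bigr|\le\sum|c(x_j,u_j)|$ to write
\[
\bigl|J^{\pi}_N(x_k)\bigr|=\left|\mathbb{E}\left[\sum_{j=k}^{N-1} c(x_j,u_j)\right]\right|\le\mathbb{E}\left[\sum_{j=k}^{N-1}\bigl|c(x_j,u_j)\bigr|\right]\le(N-k)\,c_{\max},
\]
since the sum contains $N-k$ terms and each is bounded by $c_{\max}$ from the definition \eqref{c_max}. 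Crucially, this bound is uniform over \emph{all} policies in $\Pi_k$, feasible or not. Now, since $V_k(x_k,r_k)$ is the infimum of $J^{\pi}_N(x_k)$ over the feasible set $\{\pi\in\Pi_k:R^{\pi}_N(x_k)\le r_k\}$, and this set is nonempty precisely because $r_k\in\Phi_k(x_k)$ forces $r_k\ge\underline{R}_N(x_k)=\min_{\pi\in\Pi_k}R^{\pi}_N(x_k)$, the value $V_k(x_k,r_k)$ is the infimum of a nonempty collection of reals all lying in $[-(N-k)c_{\max},(N-k)c_{\max}]$, hence lies in that same closed interval. This yields $|V_k(x_k,r_k)|\le(N-k)c_{\max}$ for every feasible $(x_k,r_k)$, i.e. $\|V_k\|_\infty\le(N-k)c_{\max}$.

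The argument is essentially routine; the only points requiring care are the nonemptiness of the feasible set (which is exactly where the definition of $\Phi_k(x_k)$ through $\underline{R}_N(x_k)$ enters, so that the infimum is not trivially $+\infty$) and the convention that the infinity norm is taken over the region where $V_k$ is finite. An equally short alternative would be backward induction on $k$ using Bellman's equation (Theorem \ref{TC_good}): the base case $V_N\equiv 0$ gives the bound for $k=N$, and in the inductive step the term $c(x_k,u)$ contributes at most $c_{\max}$ while the transition weights $Q(\cdot|x_k,u)$ are nonnegative and sum to one, so $\sum_{x'}Q(x'|x_k,u)\,V_{k+1}(x',r'(x'))$ is bounded in magnitude by $\|V_{k+1}\|_\infty\le(N-k-1)c_{\max}$; combined with the constant-shift/monotonicity structure of Lemma \ref{prop_DP_op}, this telescopes to the claimed $(N-k)c_{\max}$.
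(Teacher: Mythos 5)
Your proof is correct, but it takes a genuinely different route from the paper. You argue directly from the definition of $V_k(x_k,r_k)$ as a constrained minimization over $\Pi_k$: every tail cost $J^{\pi}_N(x_k)$ is uniformly bounded by $(N-k)c_{\max}$ (pathwise, hence in expectation), and the feasible set is nonempty precisely because $r_k\geq \underline{R}_N(x_k)$, so the infimum inherits the bound. The paper instead stays entirely inside its operator framework: it uses Bellman's equation (Theorem \ref{TC_good}) to write $V_j = T_j[V_{j+1}]$, bounds $\|T_j[V_N]\|_\infty\leq c_{\max}$, invokes the non-expansivity property of Lemma \ref{prop_DP_op} to get $\|V_j\|_\infty\leq \|V_{j+1}\|_\infty + c_{\max}$, and telescopes. (Your closing sketch of an inductive alternative is essentially this argument.) Your direct approach buys simplicity and self-containedness: it needs neither Bellman's equation nor the non-expansivity lemma, and it handles explicitly two points the paper glosses over, namely nonemptiness of the feasible policy set and the convention that $\|\cdot\|_\infty$ is taken over the region where $V_k$ is finite. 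What the paper's operator-based argument buys is reusability: the same non-expansivity-plus-telescoping pattern is exactly what is deployed again in Theorem \ref{lem_diff_val_itr} to bound $\|V^D_k - V_k\|_\infty$, and it applies verbatim to functions defined only through an operator recursion (such as the discretized value functions $V^D_k = T^D_{\Delta,k}[V^D_{k+1}]$), for which no explicit policy-space representation is written down and your direct argument would not immediately apply.
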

\begin{proof}
First, from the definition of $V_{N}(x_N,r_N)$, we know that $V_{N}(x_N,r_N)=0$ for any $x_N\in S$, $r_N\in\Phi_N(x_N)$. Therefore, the above inequality holds for the for $k=N$. For $j\in\{0,\ldots,N-1\}$, since $|c(x_j,u_j)|\leq c_{\text{max}}$ for any $x_j\in S$, $u_j\in U(x_j)$, it implies $\|T_j[V_{N}]\|_\infty\leq  c_{\text{max}}$. Furthermore,
\begin{alignat*}{1}
\|V_{j}\|_\infty=&\|V_{j}-V_{N}\|_\infty\\
\leq&\|T_j[V_{j+1}]-T_j[V_{N}]\|_\infty+\|T_j[V_{N}]-V_{N}\|_{\infty}\\
\leq &\|V_{j+1}-V_{N}\|_\infty+c_{\text{max}}=\|V_{j+1}\|_\infty+c_{\text{max}}.
\end{alignat*}
The first inequality is due to triangle inequality and Theorem \ref{TC_good}, the second inequality is due to the non-expansivity property in Lemma \ref{prop_DP_op} and both equalities in the above expression are due to $V_{N}(x,r)=0$. Thus by recursion, we get
\[
\|V_{k}\|_\infty=\sum_{j=k}^{N-1}(\|V_{j}\|_\infty-\|V_{j+1}\|_\infty).
\]
and the proof is completed by noting that $\|V_{j}\|_\infty-\|V_{j+1}\|_\infty)\leq c_{\text{max}}$ for $j\in\{k,\ldots,N-1\}$.
\end{proof}
To prove the main result, we need the following technical Lemma.
\begin{lemma}\label{lem_u_r}
For every given $x_k\in S$ and $\tilde{r}_k,r_k\in\Phi_k(x_k)$, suppose Assumptions (\ref{assume_lip}) to (\ref{assume_Q}) hold. Also, define
 $\underline{r}^\prime:=\{{r}^\prime(x^\prime)\}_{x^\prime\in S}\in\reals^{|S|}$ and $\underline{\hat{r}}^\prime:=\{\hat{r}^\prime(x^\prime)\}_{x^\prime\in S}\in\reals^{|S|}$. If $F_k(x_k,r_k)$ and $F_k(x_k,\tilde{r}_k)$ are non-empty sets, then for any $(u,\underline{r}^\prime)\in F_k(x_k,r_k)$, there exists $(\hat{u},\underline{\hat{r}}^\prime)\in F_k(x_k,\tilde{r}_k)$ such that for some $M_{r,k}>0$,
\begin{alignat}{1}
& |u-\hat{u}|+\sum_{x^\prime\in S}|r^\prime(x^\prime)-\hat{r}^\prime(x^\prime)|\leq M_{r,k}|r_k-\tilde{r}_k|.\label{lip_cond_r}
\end{alignat}
\end{lemma}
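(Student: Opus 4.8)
The plan is to treat separately the easy direction $\tilde{r}_k \ge r_k$ and the genuinely constrained direction $\tilde{r}_k < r_k$. If $\tilde{r}_k \ge r_k$, then any $(u,\underline{r}^\prime)\in F_k(x_k,r_k)$ already satisfies $d(x_k,u)+\risk_k(r^\prime(x_{k+1}))\le r_k\le \tilde{r}_k$ with the same membership $r^\prime(x^\prime)\in\Phi_{k+1}(x^\prime)$, so the choice $(\hat{u},\underline{\hat{r}}^\prime)=(u,\underline{r}^\prime)$ makes the left-hand side of \eqref{lip_cond_r} vanish and the bound is trivial. Hence the content lies in the case $\tilde{r}_k<r_k$, where the target threshold is strictly smaller and the profile $r^\prime$ must be tightened.

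For that case I would first try to keep the control fixed, $\hat{u}=u$, and push the threshold-update profile downward. The main engine is the translation-invariance axiom for the coherent one-step measure $\risk_k$: shifting every coordinate of $r^\prime$ by the constant $\delta:=r_k-\tilde{r}_k>0$, i.e. setting $\hat{r}^\prime(x^\prime)=r^\prime(x^\prime)-\delta$, gives exactly $\risk_k(\hat{r}^\prime(x_{k+1}))=\risk_k(r^\prime(x_{k+1}))-\delta$, so that $d(x_k,u)+\risk_k(\hat{r}^\prime(x_{k+1}))\le r_k-\delta=\tilde{r}_k$ and the tightened constraint is restored. Because only a uniform shift is used, $\sum_{x^\prime}|r^\prime(x^\prime)-\hat{r}^\prime(x^\prime)|=|S|\,\delta=|S|\,|r_k-\tilde{r}_k|$, and since $|u-\hat{u}|=0$ this already yields \eqref{lip_cond_r} with $M_{r,k}=|S|$, \emph{provided} the shifted profile remains admissible, i.e. $\hat{r}^\prime(x^\prime)\in\Phi_{k+1}(x^\prime)$ for every $x^\prime$.

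The obstacle, and the only delicate point, is precisely this admissibility: the uniform shift may drive some coordinate below the lower endpoint $\underline{R}_N(x^\prime)$ of $\Phi_{k+1}(x^\prime)$, which happens exactly when the fixed control $u$ cannot by itself realize the smaller threshold $\tilde{r}_k$. When the shift stays above all floors the argument above is complete. When it does not, I would instead move $r^\prime$ toward the minimal-threshold profile $\underline{R}_N(\cdot)$ along the segment $\hat{r}^\prime_\theta=(1-\theta)r^\prime+\theta\,\underline{R}_N(\cdot)$, which stays admissible by convexity of each interval $\Phi_{k+1}(x^\prime)$; convexity and monotonicity of $\risk_k$ then control the risk along the segment, and one selects the smallest $\theta$ restoring feasibility at $\tilde{r}_k$. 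If even $\theta=1$ (the profile $\underline{R}_N(\cdot)$) fails to meet the constraint under $u$, the control itself must be changed to an admissible $\hat{u}$ that can attain $\tilde{r}_k$; such $\hat{u}$ exists because $\tilde{r}_k\ge\underline{R}_N(x_k)=\min_{u^\prime}\{d(x_k,u^\prime)+\risk_k(\underline{R}_N(\cdot))\}$ by definition of $\Phi_k(x_k)$. Controlling the resulting term $|u-\hat{u}|$ is where I expect the real work to lie: here I would invoke Assumptions \ref{assume_lip}--\ref{assume_Q} together with Proposition \ref{prop_lip_xi} to bound the perturbation of $d$ and of $\risk_k$ induced by replacing $Q(x^\prime|x_k,u)$ by $Q(x^\prime|x_k,\hat{u})$, and exploit the finiteness of $U(x_k)$ --- which furnishes a strictly positive minimal gap between distinct admissible controls --- to absorb the now-bounded contribution $|u-\hat{u}|$ into a term of the form $M_{r,k}|r_k-\tilde{r}_k|$. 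Assembling these regimes into a single uniform constant $M_{r,k}$, valid for all $r_k,\tilde{r}_k\in\Phi_k(x_k)$, is the crux of the argument.
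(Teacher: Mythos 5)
Your opening regimes are correct: the case $\tilde{r}_k\geq r_k$ is indeed trivial, and when the uniform shift $\hat{r}^\prime(x^\prime)=r^\prime(x^\prime)-\delta$ stays above every floor $\underline{R}_N(x^\prime)$, translation invariance does give feasibility at $\tilde{r}_k$ with the clean constant $M_{r,k}=|S|$. But both remaining regimes --- which you yourself flag as the crux --- have genuine quantitative gaps. For the segment $\hat{r}^\prime_\theta=(1-\theta)r^\prime+\theta\,\underline{R}_N(\cdot)$, convexity only guarantees a risk decrease of order $\theta\bigl(\risk_k(r^\prime(x_{k+1}))-\risk_k(\underline{R}_N(x_{k+1}))\bigr)$, while the $\ell_1$ movement you pay is $\theta\sum_{x^\prime\in S}|r^\prime(x^\prime)-\underline{R}_N(x^\prime)|$. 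The ratio of movement to risk decrease is therefore governed by $\sum_{x^\prime}|r^\prime(x^\prime)-\underline{R}_N(x^\prime)|\big/\bigl(\risk_k(r^\prime)-\risk_k(\underline{R}_N)\bigr)$, which is not uniformly bounded: by the dual representation (Theorem \ref{rep_thm_coherent}), any coordinate $x^\prime$ with $Q(x^\prime|x_k,u)=0$ has $\xi(x^\prime)=0$ for every $\xi\ll Q$, so it contributes fully to the movement and nothing to the risk decrease, and more generally the denominator can be arbitrarily small relative to the numerator. So ``select the smallest $\theta$ restoring feasibility'' produces a feasible point but not the bound \eqref{lip_cond_r} with a constant independent of $(u,\underline{r}^\prime,r_k,\tilde{r}_k)$ --- and that uniformity is exactly what Lemma \ref{V_sens} consumes downstream.

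The control-switch regime is worse: finiteness of $U(x_k)$ works \emph{against} you, not for you. If the control must change, $|u-\hat{u}|$ is bounded \emph{below} by the positive minimal gap between distinct admissible controls, whereas $M_{r,k}|r_k-\tilde{r}_k|$ can be made arbitrarily small; a quantity bounded away from zero cannot be ``absorbed'' into a term proportional to $\delta$ unless you first show that control switches are only needed when $\delta$ itself is bounded away from zero --- but that is precisely the uniform sensitivity statement the lemma asserts, so the argument is circular as sketched. This is why the paper takes an entirely different route: it first proves that $\alpha(u,\underline{r}^\prime)=d(x_k,u)+\risk_k(r^\prime(x_{k+1}))$ is Lipschitz in $(u,\underline{r}^\prime)$ (via the dual representation, Proposition \ref{prop_lip_xi}, and Assumptions (\ref{assume_lip})--(\ref{assume_Q})); it then studies the parametric distance function $P_{x_k,u,\underline{r}^\prime}(r)=\inf_{(\tilde{u},\underline{\tilde{r}}^\prime)\in F_k(x_k,r)}\,|\tilde{u}-u|+\sum_{x^\prime}|\tilde{r}^\prime(x^\prime)-r^\prime(x^\prime)|$, verifies a constraint-qualification-type condition by computing the $r$-components of the subgradients of the constraint and objective functions ($\{-1\}$ and $\{0\}$, respectively), and invokes the value-function sensitivity theorem of \cite{Lucet_Ye_01} to conclude that $P$ is Lipschitz in $r$; attainment of the infimum is then handled separately with normal-integrand theory from \cite{Rockafellar_98}. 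The missing idea in your proposal is exactly this metric-regularity/constraint-qualification machinery, which converts nonemptiness of $F_k(x_k,\cdot)$ into uniform Lipschitz dependence of the feasible set on the threshold; translation invariance, convexity, and monotonicity of $\risk_k$ alone do not deliver it.
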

\begin{proof}
First, we want to show that $\alpha(u,\underline{r}^\prime):=d(x_k,u)+\risk_k( r^{\prime}(x_{k+1}))$ is a Lipschitz function. Define $\{\xi^\ast(x^\prime)\}_{x^\prime\in S}\in\text{arg max}_{\xi\in\mathcal{U}_{k+1}(x_k,Q(x_{k+1}|x_k,{u}))}\bigg\{d(x_k,{u})+\sum_{x^\prime\in S} \xi(x^\prime)(r^{\prime}(x^\prime))\bigg\}$. Then, there exists a $\tilde{\xi}\in\mathcal{U}_{k+1}(x_k,Q(x_{k+1}|x_k,\tilde{u}))$ such that the following expressions hold:
\begin{equation}\label{alpha_diff}
\begin{split}
&\alpha(u,\underline{r}^\prime)-\alpha(\tilde{u},\underline{\tilde{r}}^\prime)\\
=&d(x_k,u)+\risk_k(r^{\prime}(x_{k+1}))-d(x_k,\tilde{u})-\risk_k(\tilde{r}^{\prime}(x_{k+1}))\\
\leq & d(x_k,u)-d(x_k,\tilde{u})+ \sum_{x^\prime\in S} ({\xi}^\ast(x^\prime)-\tilde{\xi}(x^\prime))r^{\prime}(x^\prime)\\
&+\sum_{x^\prime\in S}\tilde{\xi}(x^\prime)(r^{\prime}(x^\prime)-\tilde{r}^{\prime}(x^\prime))\\
\leq&|d(x_k,u)-d(x_k,\tilde{u})|+\sum_{x^\prime\in S}|r^\prime(x^\prime)-\tilde{r}^\prime(x^\prime)|\\
&+\max_{x\in S}|r^{\prime}(x)|\sum_{x^\prime\in S} |{\xi}^\ast(x^\prime)-\tilde{\xi}(x^\prime)|.
\end{split}
\end{equation}
The first equality follows from definitions of coherent risk measures. The first inequality is due to Theorem \ref{rep_thm_coherent} and the definition of ${\xi}^\ast\in\mathcal{U}_{k+1}(x_k,Q(x_{k+1}|x_k,u))$. The second inequality is due to the fact that $\tilde\xi$ is a probability mass functions in $\mathcal{U}_{k+1}(x_k,Q(x_{k+1}|x_k,\tilde{u}))$. Then, by Proposition \ref{prop_lip_xi}, there exists $M_\xi>0$ such that
\[
\sum_{x^\prime\in S}|\xi^\ast(x^\prime)-\tilde{\xi}(x^\prime)|\leq {M}_\xi\sum_{x^\prime\in S}\left|Q(x^\prime|x_k,u)-Q(x^\prime|x_k,\tilde{u})\right|.
\]
Furthermore, by Assumptions (\ref{assume_lip}) to (\ref{assume_Q}) and the definition of $\Phi_{k+1}(x_{k+1})$, expression (\ref{alpha_diff}) implies
\[
\alpha(u,\underline{r}^\prime)-\alpha(\tilde{u},\underline{\tilde{r}}^\prime)\leq M_{A,k}\left(|\tilde{u}-{u}|+\sum_{x^\prime\in S}|\tilde{r}^\prime(x^\prime)-{r}^\prime(x^\prime)|\right)
\]
where
\[
M_{A,k}=\max\left\{M_{d}+M_q(\overline{R}_{N,k})M_\xi,1\right\}.\\
\]
 By a symmetric argument, we can also show that
 \[
 \alpha(\tilde{u},\underline{\tilde{r}}^\prime)-\alpha(u,\underline{{r}}^\prime)\leq M_{A,k}\left(|\tilde{u}-{u}|+\sum_{x^\prime\in S}|\tilde{r}^\prime(x^\prime)-{r}^\prime(x^\prime)|\right).
 \]
 Thus, by combining both arguments, we have shown that $\alpha(u,\underline{r}^\prime)$ is a Lipschitz function. Next, for any $(u,r^\prime)\in F_k(x_k,r_k)$, where
 \[
 \begin{split}
 F_k(x_k,r)=\bigg\{&(u,r^\prime)|\,\,u\in U(x_k), \,\,r^\prime(x^\prime)\in\Phi_{k+1}(x^\prime),\\
 &\forall x^\prime\in S,\,\, \alpha(u,\underline{r}^\prime)\leq r\bigg\},
 \end{split}
 \]
 consider the following optimization problem:
\begin{alignat*}{1}
P_{x_k,u,\underline{r}^\prime}(r)=\inf_{(\tilde{u},\underline{\tilde{r}}^\prime)\in F_k(x_k,r)} & \,\,|\tilde{u}-u|+\sum_{x^\prime\in S}|\tilde{r}^\prime(x^\prime)-{r}^\prime(x^\prime)|.
\end{alignat*}
Since $(u,\underline{r}^\prime)$ is a feasible point of $F_k(x_k,r_k)$, $P_{x_k,u,\underline{r}^\prime}(r_k)=0$. By our assumptions, both $U(x_k)$ and $\Phi_{k+1}(x_{k+1})$ are compact sets of real numbers. Note that both $|\tilde{u}-u|+\sum_{x^\prime\in S}|\tilde{r}^\prime(x^\prime)-{r}^\prime(x^\prime)|$ and $\alpha(\tilde{u},\underline{\tilde{r}}^\prime)$ are Lipschitz functions in $(\tilde{u},\underline{\tilde{r}}^\prime)$. Also, consider the sub-gradient of $f(\tilde{u},\underline{\tilde{r}}^\prime,r):=\alpha(\tilde{u},\underline{\tilde{r}}^\prime)-{r}$\footnote{ A sub-gradient of a function $f:\mathcal{X}\rightarrow\reals$ at a point $x_0\in\mathcal{X}$ is a real vector $g$ such that for all $x\in\mathcal{X}$, $f(x)-f(x_0)\geq g^T(x-x_0)$, $\forall x\in\mathcal{X}$.}:
\[
\begin{split}
\partial f(\tilde{u},&\,\underline{\tilde{r}}^\prime,r)=\bigcap_{(\hat{u},\underline{\hat{r}}^\prime,\hat{r})\in\text{dom}(f)}\left\{\begin{bmatrix}
g_1\\
g_2\\
g_3
\end{bmatrix}\in\reals^{|U|}\times\reals^{|S|}\times\overline\reals\right.:\\
&\left. f(\hat{u},\underline{\hat{r}}^\prime,\hat{r})\geq f(\tilde{u},\underline{\tilde{r}}^\prime,r)+\begin{bmatrix}
g_1\\
g_2\\
g_3
\end{bmatrix}^T\left(\begin{bmatrix}
\tilde{u}\\
\underline{\tilde{r}}^\prime\\
r
\end{bmatrix}-\begin{bmatrix}
\hat{u}\\
\underline{\hat{r}}^\prime\\
\hat{r}
\end{bmatrix}\right)\right\}.
\end{split}
\]
\iftoggle{paper}
{
For any $(g_1,g_2,g_3)\in\partial f(u,\underline{{r}}^\prime,r)$, this implies
\[
\alpha(\tilde{u},\underline{\tilde{r}}^\prime)-\alpha(\hat{u},\underline{\hat{r}}^\prime)\geq(g_3+1)(r_k-\hat{r}_k)+g_1(\tilde{u}-\hat{u})+g_2^T(\underline{\tilde{r}}^\prime-\underline{\hat{r}}^\prime),
\]
$\forall (\hat{u},\underline{\hat{r}}^\prime,\hat{r})\in\text{dom} f$. Suppose $g_3>-1$, then there exists $\epsilon>0$ such that $g_3+1=\epsilon$. Also, by the Lipschitz-ness of $\alpha(\tilde{u},\underline{\tilde{r}}^\prime)$ and Cauchy Schwarz inequality, we get
\[
\begin{split}
&(M_{A,k}+|g_1|)|\tilde{u}-\hat{u}|+(1+\|g_2\|)\sum_{x^\prime\in S}|\tilde{r}^\prime(x^\prime)-\hat{r}^\prime(x^\prime)|\\
\geq&\epsilon(r-\hat{r}),\quad \forall\,\, (\hat{u},\underline{\hat{r}}^\prime,\hat{r})\in\text{dom} (f)
\end{split}
\]
Since $\tilde{u},\hat{u}$ are finite and $\underline{\tilde{r}}^\prime, \underline{\hat{r}}^\prime$ are bounded, the above inequality fails if $\hat{r}\rightarrow-\infty$. This yields a contradiction. Similarly, by considering $\hat{r}\rightarrow\infty$, we can also arrive at a contradiction for the case of $g_3<-1$. Therefore, the set of the third element of $\partial f(\tilde{u},\underline{\tilde{r}}^\prime,r)$ is a singleton and it equals to $\{-1\}$.
}
{
Since $\alpha(\tilde{u},\underline{\tilde{r}}^\prime)-{r}$ is differentiable on $r$, the third element of $\partial f(\tilde{u},\underline{\tilde{r}}^\prime,r)$ is a singleton and it equals to $\{-1\}$.
}
Next, consider the sub-gradient of $h(\tilde{u},\underline{\tilde{r}}^\prime,r)=|\tilde{u}-u|+\sum_{x^\prime\in S}|r^\prime(x^\prime)-\tilde{r}^\prime(x^\prime)|$. By identical arguments, we can show that the set of the third element of $\partial h(\tilde{u},\underline{\tilde{r}}^\prime,r)$ is a singleton and it equals to $\{0\}$. Therefore, Theorem 4.2 in \cite{Lucet_Ye_01} implies $P_{x_k,u,\underline{r}^\prime}(r)$ is strictly differentiable (Lipschitz continuous) in $r$ \footnote{ Theorem 4.2 in \cite{Lucet_Ye_01} implies both $\partial P_{x_k,u,\underline{r}^\prime}(r), \partial^\infty P_{x_k,u,\underline{r}^\prime}(r)\subseteq\{0\}$ for $r_k\in\Phi_k(r_k)$. This result further implies $P_{x_k,u,\underline{r}^\prime}(r)$ is strictly differentiable. For details, please refer to this paper.}. Then, for any $(u,\underline{r}^\prime)\in F_k(x_k,r_k)$, there exists $M_{r,k}>0$ such that
\[
\inf_{(\tilde{u},\underline{\tilde{r}}^\prime)\in F_k(x_k,\tilde{r}_k)}|\tilde{u}-u|+\sum_{x^\prime\in S}|r^\prime(x^\prime)-\tilde{r}^\prime(x^\prime)|\leq M_{r,k}|\tilde{r}_k-r_k|.
\]

Finally we want to show that the infimum on the left side of the above expression is attained. First, $|\tilde{u}-u|+\sum_{x^\prime\in S}|r^\prime(x^\prime)-\tilde{r}^\prime(x^\prime)|$ is coercive and continuous in $(\tilde{u},\underline{\tilde{r}}^\prime)$. By Example 14.29 in \cite{Rockafellar_98}, this function is a Caratheodory integrand and is also a normal integrand. Furthermore, since $F_k(x_k,\tilde{r}_k)$ is a closed set  (since $U(x_k)$ is a finite set, $\Phi_{k+1}(x_{k+1})$ is a compact set and the constraint inequality is non-strict)and $\alpha(\tilde{u},\underline{\tilde{r}}^\prime)-\tilde{r}_k$ is a normal integrand (see the proof of Theorem IV.2 in \cite{Chow_Pavone_13_1}), by Theorem 14.36 and Example 14.32 in \cite{Rockafellar_98}, one can show that the following indicator function:
\[
\mathbb{I}_{x_k}(\tilde{u},\underline{\tilde{r}}^\prime,\tilde{r}_k):=\left\{\begin{array}{ll}
0&\text{if $(\tilde{u},\underline{\tilde{r}}^\prime)\in F_k(x_k,\tilde{r}_k)$}\\
\infty&\text{otherwise}
\end{array}\right.
\]
is a normal integrand.  Furthermore, By Proposition 14.44 in \cite{Rockafellar_98}, the function
\[
g_{x_k}(\tilde{u},\underline{\tilde{r}}^\prime,\tilde{r}_k):=|\tilde{u}-u|+\sum_{x^\prime\in S}|r^\prime(x^\prime)-\tilde{r}^\prime(x^\prime)|+\mathbb{I}_{x_k}(\tilde{u},\underline{\tilde{r}}^\prime,\tilde{r}_k)
\]
is a normal integrand. Also, $\inf_{\tilde{u}}g_{x_k}(\tilde{u},\underline{\tilde{r}}^\prime,\tilde{r}_k)=\inf_{(\tilde{u},\underline{\tilde{r}}^\prime)\in F_k(x_k,\tilde{r}_k)}|\tilde{u}-u|+\sum_{x^\prime\in S}|r^\prime(x^\prime)-\tilde{r}^\prime(x^\prime)|$. By Theorem 14.37 in \cite{Rockafellar_98}, there exists $(\hat{u},\underline{\hat{r}}^\prime)\in F_k(x_k,\tilde{r}_k)$ such that $(\hat{u},\underline{\hat{r}}^\prime)\argmin g_{x_k}(\tilde{u},\underline{\tilde{r}}^\prime,\tilde{r}_k)$. Furthermore, the right side of the above equality is finite since $F_k(x_k,\tilde{r}_k)$ is a non-empty set. The definition of $\mathbb{I}_{x_k}(\hat{u},\underline{\hat{r}}^\prime,\tilde{r}_k)$ implies that $(\hat{u},\underline{\hat{r}}^\prime)\in F_k(x_k,\tilde{r}_k)$. Therefore this implies expression (\ref{lip_cond_r}) holds for any $(u,\underline{r}^\prime)\in F_k(x_k,r_k)$.
\end{proof}
The following Lemma provides a sensitivity condition for the value function $V_{k}(x_k,r_k)$.
\begin{lemma}\label{V_sens}
Suppose $F_k(x_k,r_k)$ and $F_k(x_k,\tilde{r}_k)$ are non-empty sets for $k\in\{0,\ldots,N-1\}$. Then, for $x_k\in S$, $r_k,\tilde{r}_k\in\Phi_k(x_k)$, such that $r_k\geq\tilde{r}_k$, $k\in\{0,\ldots,N\}$, the following expression holds:
\begin{align}
& 0\leq V_k(x_k, \tilde{r}_k)-V_k(x_k, r_k)\leq M_{V,k}(r_k-\tilde{r}_k)\label{V_bdd_1}
\end{align}
where $M_{V,k}=(M_{c}+M_q(N-k-1)c_{\text{max}}+M_{V,k+1})M_{r,k}>0$, and $M_{V,N}=0$.
\end{lemma}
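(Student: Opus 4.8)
The plan is to establish the two inequalities separately: the left one directly from monotonicity of the nested feasible sets, and the right one by backward induction on $k$. For the left inequality $0\le V_k(x_k,\tilde r_k)-V_k(x_k,r_k)$, I would observe that since $\tilde r_k\le r_k$ the constraint $d(x_k,u)+\risk_k(r^\prime(x_{k+1}))\le\tilde r_k$ is more demanding than the one with $r_k$, while the $U(x_k)$ and $\Phi_{k+1}$ components of the feasible set are unchanged; hence $F_k(x_k,\tilde r_k)\subseteq F_k(x_k,r_k)$. Because $V_k(x_k,\cdot)$ is, by Theorem \ref{TC_good}, the infimum of one and the same objective over these nested sets, enlarging the feasible set can only decrease the optimum, giving $V_k(x_k,r_k)\le V_k(x_k,\tilde r_k)$.

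For the right inequality I would argue by backward induction. The base case $k=N$ is trivial since $V_N(x_N,\cdot)\equiv 0$ on the singleton $\Phi_N(x_N)=\{0\}$, consistent with $M_{V,N}=0$. For the inductive step I assume the bound at stage $k+1$, which is exactly the Lipschitz estimate $|V_{k+1}(x^\prime,a)-V_{k+1}(x^\prime,b)|\le M_{V,k+1}|a-b|$ for admissible thresholds. Since the feasible set $F_k(x_k,r_k)$ is compact ($U(x_k)$ finite, $\Phi_{k+1}$ compact) and the Bellman objective is continuous by the inductive hypothesis, the infimum in $T_k$ is attained, so I may pick an optimizer $(u,\underline r^\prime)\in F_k(x_k,r_k)$ realizing $V_k(x_k,r_k)$ via Theorem \ref{TC_good}. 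Because $\tilde r_k\le r_k$, Lemma \ref{lem_u_r} supplies a feasible pair $(\hat u,\underline{\hat r}^\prime)\in F_k(x_k,\tilde r_k)$ with $|u-\hat u|+\sum_{x^\prime\in S}|r^\prime(x^\prime)-\hat r^\prime(x^\prime)|\le M_{r,k}|r_k-\tilde r_k|$. Evaluating the $T_k$ objective at this feasible pair upper bounds $V_k(x_k,\tilde r_k)$, so that
\begin{equation*}
\begin{split}
V_k(x_k,\tilde r_k)-V_k(x_k,r_k) &\le [c(x_k,\hat u)-c(x_k,u)]\\
&\quad +\sum_{x^\prime\in S}\big[Q(x^\prime|x_k,\hat u)V_{k+1}(x^\prime,\hat r^\prime(x^\prime))-Q(x^\prime|x_k,u)V_{k+1}(x^\prime,r^\prime(x^\prime))\big].
\end{split}
\end{equation*}

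Next I would split the sum by adding and subtracting $Q(x^\prime|x_k,\hat u)V_{k+1}(x^\prime,r^\prime(x^\prime))$, and bound the three resulting pieces by: (i) Assumption~(\ref{assume_lip}) for the cost term, yielding $M_c|u-\hat u|$; (ii) the induction hypothesis together with $Q(x^\prime|x_k,\hat u)\le 1$ for $\sum_{x^\prime}Q(x^\prime|x_k,\hat u)[V_{k+1}(x^\prime,\hat r^\prime(x^\prime))-V_{k+1}(x^\prime,r^\prime(x^\prime))]$, yielding $M_{V,k+1}\sum_{x^\prime}|\hat r^\prime(x^\prime)-r^\prime(x^\prime)|$; and (iii) Lemma \ref{V_bdd} (so $\|V_{k+1}\|_\infty\le(N-k-1)c_{\text{max}}$) with Assumption~(\ref{assume_Q}) for $\sum_{x^\prime}[Q(x^\prime|x_k,\hat u)-Q(x^\prime|x_k,u)]V_{k+1}(x^\prime,r^\prime(x^\prime))$, yielding $M_q(N-k-1)c_{\text{max}}|u-\hat u|$. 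Collecting these gives a bound $[M_c+M_q(N-k-1)c_{\text{max}}]|u-\hat u|+M_{V,k+1}\sum_{x^\prime}|r^\prime(x^\prime)-\hat r^\prime(x^\prime)|$; bounding each coefficient by the sum $M_c+M_q(N-k-1)c_{\text{max}}+M_{V,k+1}$ and invoking the estimate from Lemma \ref{lem_u_r} produces exactly $M_{V,k}(r_k-\tilde r_k)$, closing the induction.

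The main obstacle is the direction of the construction in the inductive step: I must upper bound the value at the \emph{tighter} threshold $\tilde r_k$ using an optimizer of the \emph{looser} problem at $r_k$, which is precisely what Lemma \ref{lem_u_r} furnishes by transferring a feasible control/threshold function across thresholds with a displacement controlled by $M_{r,k}|r_k-\tilde r_k|$. Some care is also needed to ensure the induction hypothesis genuinely applies at each successor state, i.e.\ that the tail subproblems at the perturbed thresholds $\hat r^\prime(x^\prime),\,r^\prime(x^\prime)\in\Phi_{k+1}(x^\prime)$ remain feasible so that the stage-$(k+1)$ Lipschitz estimate is valid termwise.
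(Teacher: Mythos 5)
Your proposal is correct and follows essentially the same route as the paper's proof: backward induction with the same three-term decomposition (cost term via Assumption~(\ref{assume_lip}), value-difference term via the inductive hypothesis, transition-probability term via Lemma \ref{V_bdd} and Assumption~(\ref{assume_Q})), with Lemma \ref{lem_u_r} transferring the optimizer of the looser problem at $r_k$ into a feasible pair for the tighter problem at $\tilde r_k$. The only cosmetic differences are that you prove the left inequality and the attainment of the infimum inline (via nested feasible sets and compactness/continuity), where the paper cites Lemma IV.1 and Theorem IV.2 of \cite{Chow_Pavone_13_1}.
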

\begin{proof}
First, for $k\in\{0,\ldots,N-1\}$, when $\tilde{r}_k\leq r_k$, by Lemma IV.1 in \cite{Chow_Pavone_13_1}, we know that  $V_{k}(x_k,\tilde{r}_k)\geq V_{k}(x_k,r_k)$. The proof is completed if we can show that for $\tilde{r}_k\leq r_k$,
\[
V_k(x_k, \tilde{r}_k)-V_k(x_k, r_k)\leq M_{V,k}(r_k-\tilde{r}_k).
\]
First, at $k=N$, for any $r_N,\tilde{r}_N\in\Phi_N(x_N)$, we get $V_{N}(x_N,\tilde{r}_N)=V_{N}(x_N,r_N)=0$. Inequality (\ref{V_bdd_1}) trivially holds for any $M_{V,N}>0$. By induction's assumption, suppose there exists $M_{V,j+1}>0$ such that following inequality holds at $k=j+1$:
\[
\left|V_{j+1}(x,\tilde{r}_{j+1})-V_{j+1}(x,r_{j+1})\right|\leq M_{V,j+1}\left|\tilde{r}_{j+1}-{r}_{j+1}\right|.
\]
for any $x\in S$. Then, for the case at $k=j$, by Theorem IV.2 in \cite{Chow_Pavone_13_1}, the infimum of $T_j[V_{j+1}]$ is attained. From Theorem \ref{TC_good}, $V_j(x_j, r_j)=T_j[V_{j+1}](x_j, r_j)$. For any given $x_j\in S$, $r_j\in \Phi_j(x_j)$, let $(u^\ast_j,r^{\ast,\prime})$ be the minimizer of $T_j[V_{j+1}](x_j, r_j)$. Then, there exists $(\hat{u}_j,\hat{r}^{\prime})\in F_j(x_j,\tilde{r}_j)$, such that inequality (\ref{lip_cond_r}) and the following expressions hold:
\begin{alignat*}{1}
&V_{j}(x_j,\tilde{r}_j)-V_{j}(x_j,r_j)\\
\leq&c(x_j,\hat{u}_j)-c(x_j,u^\ast_j)+\sum_{x^{\prime}  \in S} \, Q(x^{\prime}|x_j,\hat{u}_j)V_{j+1}(x^{\prime}, \hat{r}^{\prime}(x^\prime))\\
&-\sum_{x^{\prime}  \in S} \, Q(x^{\prime}|x_j,{u}^\ast_j)V_{j+1}(x^{\prime}, r^{\ast,\prime}(x^{\prime}))\\
=&c(x_j,\hat{u}_j)-c(x_j,u^\ast_j)\\
&+  \sum_{x^{\prime}  \in S} \, Q(x^{\prime}|x_j,\hat{u}_j)\, \left(V_{j+1}(x^{\prime}, \hat{r}^{\prime}(x^{\prime}))- V_{j+1}(x^{\prime}, r^{\ast,\prime}(x^{\prime}))\right)\\
&+\sum_{x^{\prime}  \in S} \,  \left( Q(x^{\prime}|x_j,\hat{u}_j)- Q(x^{\prime}|x_j,u^\ast_j)\right)\, V_{j+1}(x^{\prime}, r^{\ast,\prime}(x^{\prime}))\\
\leq&\|V_{j+1}\|_\infty\sum_{x^{\prime}  \in S} \,  \left|Q(x^{\prime}|x_j,\hat{u}_j)- Q(x^{\prime}|x_j,u^\ast_j)\right|\\
&+\sum_{x^\prime\in S}\bigg\{\left|V_{j+1}(x^{\prime}, r^{\ast,\prime}(x^{\prime}))- V_{j+1}(x^{\prime}, \hat{r}^{\prime}(x^{\prime}))\right|\bigg\}\\
&+|c(x_j,\hat{u}_j)-c(x_j,u^\ast_j)|.
\end{alignat*}
The first inequality follows from the definitions. The second inequality follows from $\sum_{x^{\prime}  \in S} \, Q(x^{\prime}|x_j,\hat{u}_j)=1$ and the definition of $\|V_{j+1}\|_\infty$ and $c_{\text{max}}$. From Assumption (\ref{assume_lip}) and Inductions' assumption, the above expression further implies
\begin{alignat}{1}
&V_{j}(x_j,\tilde{r}_j)-V_{j}(x_j,r_j)\nonumber\\
\leq&(M_{c}+M_q\|V_{j+1}\|_\infty)|\hat{u}_j-u^\ast_j|\nonumber\\
&+M_{V,j+1}\sum_{x^\prime\in S}|\hat{r}^{\prime}(x^{\prime})-r^{\ast,\prime}(x^{\prime})|\nonumber\\
\leq &(M_{c}+M_q\|V_{j+1}\|_\infty+M_{V,j+1})M_{r,j}|\tilde{r}_j-r_j|.\label{V_non_rand}
\end{alignat}
The last inequality is simply resulted from by Lemma \ref{lem_u_r}. In addition, from Lemma \ref{V_bdd}, we get
\[
\begin{split}
\|V_{j+1}\|_\infty=&\sum_{i=j+1}^{N-1}\|V_i\|_\infty-\|V_{i+1}\|_\infty\leq (N-j-1)c_{\text{max}}.
\end{split}
\]
Then, by applying this inequality to the expression derived in the previous part of the proof, we get
\begin{alignat}{1}
&V_{j}(x_j,\tilde{r}_j)-V_{j}(x_j,r_j)\\
\leq&\left(M_{c}+M_q(N-j-1)c_{\text{max}}+M_{V,j+1}\right)M_{r,j}|\tilde{r}_j-r_j|.\nonumber
\end{alignat}
Thus by induction, expression (\ref{V_bdd_1}) holds.
\end{proof}
The next Lemma shows that the difference between dynamic programming operators $\bar{T}^{D}_{\Delta,k}[V_{k+1}](x_k, r_k)$ and ${T}_{k}[V_{k+1}](x_k, r_k)$ is bounded.
\begin{lemma}\label{DP_delta}
For any $x_k\in S$, $r_k\in\Phi_k(x_k)$, the following inequality holds for $k\in\{0,\ldots,N-1\}$:
\[
0\leq \bar{T}^{D}_{\Delta,k}[V_{k+1}](x_k, r_k)-{T}_{k}[V_{k+1}](x_k, r_k)\leq M_{V,k+1}\Delta
\]
where $M_{V,k+1}>0$ is given by Lemma \ref{V_sens} and $\Delta$ is the step size of the discretization of risk threshold $r_{k}$.
\end{lemma}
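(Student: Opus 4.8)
The plan is to prove the two inequalities separately, treating the left (lower) bound as essentially structural and the right (upper) bound via an explicit rounding construction on the future thresholds.

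For the lower bound I would observe that the only difference between $\bar{T}^{D}_{\Delta,k}$ and $T_k$ lies in the feasible set over which the minimization is performed: both use the \emph{same} input threshold $r_k$, but $\bar{T}^{D}_{\Delta,k}$ restricts each future threshold $r^{\prime}(x^{\prime})$ to the finite grid $\overline{\Phi}_{k+1}(x^{\prime})$, whereas $T_k$ allows any $r^{\prime}(x^{\prime}) \in \Phi_{k+1}(x^{\prime})$. Since the grid points satisfy $\overline{\Phi}_{k+1}(x^{\prime}) \subseteq \Phi_{k+1}(x^{\prime})$ and the defining risk inequality is identical, we get $F^D_k(x_k,r_k) \subseteq F_k(x_k,r_k)$. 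Minimizing the same objective over a smaller set can only increase the value, which yields $\bar{T}^{D}_{\Delta,k}[V_{k+1}](x_k,r_k) \geq T_k[V_{k+1}](x_k,r_k)$. (If $F_k(x_k,r_k)$ is empty both sides are $+\infty$ by convention; otherwise it is non-empty and, by the construction below, so is $F^D_k$.)

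For the upper bound I would start from a minimizer $(u^{\ast}, r^{\ast,\prime})$ of $T_k[V_{k+1}](x_k,r_k)$, whose existence is guaranteed by Theorem IV.2 of \cite{Chow_Pavone_13_1}. The key construction is to keep the same control $u^{\ast}$ and round each future threshold \emph{downward} to the nearest grid point: for each $x^{\prime}\in S$ let $r^{D,\prime}(x^{\prime})$ be the largest element of $\overline{\Phi}_{k+1}(x^{\prime})$ not exceeding $r^{\ast,\prime}(x^{\prime})$, so that $r^{D,\prime}(x^{\prime}) \leq r^{\ast,\prime}(x^{\prime})$ and $r^{\ast,\prime}(x^{\prime}) - r^{D,\prime}(x^{\prime}) \leq \Delta$ (possible since the grid of step size $\Delta$ covers all of $\Phi_{k+1}(x^{\prime})$).

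The main obstacle — and the reason for rounding \emph{down} rather than up — is to show simultaneously that $(u^{\ast}, r^{D,\prime})$ stays feasible for $F^D_k(x_k,r_k)$ \emph{and} that the objective does not increase by more than $M_{V,k+1}\Delta$. Feasibility follows from the monotonicity of the coherent one-step risk measure $\risk_k$: since $r^{D,\prime} \leq r^{\ast,\prime}$ pointwise, $\risk_k(r^{D,\prime}(x_{k+1})) \leq \risk_k(r^{\ast,\prime}(x_{k+1}))$, hence $d(x_k,u^{\ast}) + \risk_k(r^{D,\prime}(x_{k+1})) \leq d(x_k,u^{\ast}) + \risk_k(r^{\ast,\prime}(x_{k+1})) \leq r_k$, so $(u^{\ast},r^{D,\prime})\in F^D_k(x_k,r_k)$. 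Because this point is then a candidate in the discretized minimization while $(u^{\ast},r^{\ast,\prime})$ is optimal for the exact one, the operator difference is bounded by the difference of objectives at these two points, where the $c(x_k,u^{\ast})$ terms cancel, leaving $\sum_{x^{\prime}\in S} Q(x^{\prime}|x_k,u^{\ast})\big(V_{k+1}(x^{\prime},r^{D,\prime}(x^{\prime})) - V_{k+1}(x^{\prime},r^{\ast,\prime}(x^{\prime}))\big)$. Applying the sensitivity estimate of Lemma \ref{V_sens} to each summand (with $r^{\ast,\prime}(x^{\prime}) \geq r^{D,\prime}(x^{\prime})$) gives $V_{k+1}(x^{\prime},r^{D,\prime}(x^{\prime})) - V_{k+1}(x^{\prime},r^{\ast,\prime}(x^{\prime})) \leq M_{V,k+1}\big(r^{\ast,\prime}(x^{\prime}) - r^{D,\prime}(x^{\prime})\big) \leq M_{V,k+1}\Delta$, and summing against the probabilities $Q(\cdot|x_k,u^{\ast})$ (which sum to $1$) yields the claimed bound $M_{V,k+1}\Delta$.
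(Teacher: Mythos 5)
Your proposal is correct and follows essentially the same argument as the paper: the lower bound via $F^D_k(x_k,r_k)\subseteq F_k(x_k,r_k)$, and the upper bound by taking the exact minimizer $(u^\ast,r^{\ast,\prime})$, rounding each future threshold down to the nearest grid point (preserving feasibility by monotonicity of $\risk_k$), and applying Lemma \ref{V_sens} to the resulting objective difference. The only cosmetic difference is that the paper bounds the weighted sum by a supremum over $x^\prime$ rather than summing the per-state bounds against $Q(\cdot|x_k,u^\ast)$, which is an equivalent step.
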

\begin{proof}
First, by the definition of $F^D_k(x_k,r_k)$, we know that $F^D_k(x_k,r_k)\subseteq F_k(x_k,r_k)$. Since, the objective functions and all other constraints in $\bar{T}^{D}_{\Delta,k}[V_{k+1}](x_k, r_k)$ and ${T}_{k}[V_{0,k+1}](x_k, r_k)$ are identical, we can easily conclude that $\bar{T}^{D}_{\Delta,k}[V_{k+1}](x_k, r_k)\geq {T}_{k}[V_{k+1}](x_k, r_k)$ for all $x_k\in S$, $r_k\in\Phi_k(x_k)$. The proof is completed if we can show
\[
\bar{T}^{D}_{\Delta,k}[V_{k+1}](x_k, r_k)-{T}_{k}[V_{k+1}](x_k, r_k)\leq M_{V,k+1}\Delta.
\]
By Theorem IV.2 in \cite{Chow_Pavone_13_1} we know that the infimum of ${T}_{k}[V_{k+1}](x_k, r_k)$ is attained. Let $(u_k^\ast,r^{\ast,\prime})\in F_k(x_k,r_k)$ be the minimizer of $T_k[V_{k+1}](x_k, r_k)$. Also, for every fixed $x^\prime\in S$, let $\tau(x^\prime)\in\{0,\ldots,t\}$ such that $r^{\ast,\prime}(x^\prime)\in \Phi^{(\tau(x^\prime))}_{k+1}(x^\prime)$. Now, construct
\[
\tilde{r}^{\prime}(x^\prime):=r_{k+1}^{(\tau(x^\prime))}\in\Phi^{(\tau(x^\prime))}_{k+1}(x^\prime).
\]
By definition of $\overline{\Phi}_{k+1}(x^\prime)$, we know that $\tilde{r}^{\prime}(x^\prime)\in\overline{\Phi}_{k+1}(x^\prime)$, $\forall x^\prime\in S$. Since $r_{k+1}^{(\tau(x^\prime))}$ is the lower bound of $\Phi^{(\tau(x^\prime))}_{k+1}(x^\prime)$, we have $r_{k+1}^{(\tau(x^\prime))}\leq r^{\ast,\prime}(x^\prime)$. Furthermore, since the size of $\Phi^{(\tau(x^\prime))}_{k+1}(x^\prime)$ is $\Delta$, we know that $|r_{k+1}^{(\tau(x^\prime))}-r^{\ast,\prime}(x^\prime)|\leq\Delta$ for any $x^\prime\in S$. By monotonicity of coherent risk measures,
\[
\begin{split}
&d(x_k,u_k^\ast)+\risk_k(\tilde{r}^{\prime}(x_{k+1}))\leq d(x_k,u_k^\ast)+ \risk_k(r^{\ast,\prime}(x_{k+1}))\leq r_k.
\end{split}
\]
Therefore, we conclude that $(u_k^\ast,\tilde{r}^{\prime})\in F^D_k(x_k,r_k)$ is a feasible solution to the problem in $\bar{T}^{D}_{\Delta,k}[V_{k+1}](x_k, r_k)$. From this fact, we get the following inequalities:
\[
\begin{split}
& \bar{T}^{D}_{\Delta,k}[V_{k+1}](x_k, r_k)-{T}_{k}[V_{k+1}](x_k, r_k)\\
\leq&\sum_{x^{\prime}  \in S} \, Q(x^{\prime}|x_k,{u}^\ast_k)\bigg(V_{k+1}(x^{\prime}, \tilde{r}^{\prime}(x^{\prime}))-V_{k+1}(x^{\prime}, {r}^{\ast,\prime}(x^{\prime}))\bigg)\\
\leq& \sup_{x^\prime\in S}\bigg\{\left|V_{k+1}(x^{\prime}, \tilde{r}^{\prime}(x^{\prime}))- V_{k+1}(x^{\prime}, r^{\ast,\prime}(x^{\prime}))\right|\bigg\}\\
\leq& M_{V,k+1}\sup_{x^\prime\in S}|\tilde{r}^{\prime}(x^{\prime})-r^{\ast,\prime}(x^{\prime}))|\leq M_{V,k+1}\Delta.
\end{split}
\]
The first inequality is due to substitutions of the feasible solution of $\bar{T}^{D}_{\Delta,k}[V_{k+1}](x_k, r_k)$ and the optimal solution of ${T}_{k}[V_{k+1}](x_k, r_k)$. The second inequality is trivial. The third inequality is a result of Lemma \ref{V_sens} and the fourth inequality is due to the definition of $\tilde{r}^{\prime}(x^\prime)$, for all $x^\prime\in S$. This completes the proof.
\end{proof}
The following Lemma is the main result of this section. It characterizes the error bound between the dynamic programming operator $T_k[V_{k+1}](x_k,r_k)$ and $T^{D}_{\Delta,k}[V_{k+1}](x_k,r_k)$.
\begin{lemma}\label{thm_diff_DP}
Suppose Assumptions (\ref{assume_lip}) to (\ref{assume_Q}) hold. Then, there exists a constant $M_{V,k}>0$ such that
\begin{equation}
\begin{split}
\|T^{D}_{\Delta,k}[V_{k+1}]-T_k[V_{k+1}]\|_\infty\leq (M_{V,k}+M_{V,k+1})\Delta\label{exp_approx_bdd_1}
\end{split}
\end{equation}
where $T^{D}_{\Delta,k}[V_{k+1}](x, r)$ is defined in equation (\ref{dis_DP_1}), $\Delta$ is the step size of the discretization of risk threshold $r_k$ and the expression of $M_{V,k},M_{V,k+1}>0$ is given in Lemma \ref{V_sens}, for $k\in\{0,\ldots,N-1\}$.
\end{lemma}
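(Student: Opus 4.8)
The plan is to control the one-sided quantity $T^{D}_{\Delta,k}[V_{k+1}](x_k,r_k)-T_k[V_{k+1}](x_k,r_k)$ pointwise and then pass to the supremum over $(x_k,r_k)$. The discretized operator deviates from the exact one in two conceptually distinct ways: it rounds the \emph{current} threshold $r_k$ down to the grid point $r_k^{(\tau)}$ (this is exactly how $T^{D}_{\Delta,k}$ relates to $\bar{T}^{D}_{\Delta,k}$ via \eqref{dis_DP_1}), and it restricts the \emph{future} thresholds $r^{\prime}$ to the finite set $\overline{\Phi}_{k+1}$ (the gap already quantified in Lemma \ref{DP_delta}). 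The key idea is to decouple these two effects by a telescoping decomposition that inserts the exact operator evaluated at the grid point, so that each resulting term is handled by a single preceding lemma.

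Concretely, I would begin from the definition $T^{D}_{\Delta,k}[V_{k+1}](x_k,r_k)=\bar{T}^{D}_{\Delta,k}[V_{k+1}](x_k,r_k^{(\tau)})$, where $\tau$ is the index of the cell $\Phi_k^{(\tau)}(x_k)$ containing $r_k$, and add and subtract the intermediate term $T_k[V_{k+1}](x_k,r_k^{(\tau)})$:
\[
\begin{aligned}
&T^{D}_{\Delta,k}[V_{k+1}](x_k,r_k)-T_k[V_{k+1}](x_k,r_k)\\
&\quad=\Bigl(\bar{T}^{D}_{\Delta,k}[V_{k+1}](x_k,r_k^{(\tau)})-T_k[V_{k+1}](x_k,r_k^{(\tau)})\Bigr)\\
&\qquad+\Bigl(T_k[V_{k+1}](x_k,r_k^{(\tau)})-T_k[V_{k+1}](x_k,r_k)\Bigr).
\end{aligned}
\]
The first bracket is precisely the quantity estimated by Lemma \ref{DP_delta} applied at threshold $r_k^{(\tau)}$; note that $r_k^{(\tau)}\in\Phi_k(x_k)$, since $r_k\in\Phi_k(x_k)$ forces $\tau\le t$ and hence $\underline{R}_N(x_k)=r_k^{(0)}\le r_k^{(\tau)}\le r_k\le\overline{R}_{N,k}$. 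Therefore this bracket is nonnegative and bounded above by $M_{V,k+1}\Delta$.

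For the second bracket I would invoke Theorem \ref{TC_good} to replace $T_k[V_{k+1}]$ by the optimal cost function $V_k$, rewriting it as $V_k(x_k,r_k^{(\tau)})-V_k(x_k,r_k)$. Since $r_k^{(\tau)}$ is the lower endpoint of the cell containing $r_k$, the uniform-grid construction gives $0\le r_k-r_k^{(\tau)}\le\Delta$, and the sensitivity estimate of Lemma \ref{V_sens} (with $\tilde r_k=r_k^{(\tau)}\le r_k$) shows this bracket is also nonnegative and at most $M_{V,k}(r_k-r_k^{(\tau)})\le M_{V,k}\Delta$. Summing the two contributions yields the two-sided pointwise bound $0\le T^{D}_{\Delta,k}[V_{k+1}](x_k,r_k)-T_k[V_{k+1}](x_k,r_k)\le(M_{V,k}+M_{V,k+1})\Delta$, and taking the supremum over $x_k\in S$ and $r_k\in\Phi_k(x_k)$ delivers \eqref{exp_approx_bdd_1}.

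The genuinely hard analysis — the Lipschitz dependence of the feasible set on the threshold (Lemma \ref{lem_u_r}), the resulting sensitivity of $V_k$ (Lemma \ref{V_sens}), and the grid-rounding bound (Lemma \ref{DP_delta}) — is already established, so the conceptual crux here is simply choosing the decomposition through $T_k[V_{k+1}](x_k,r_k^{(\tau)})$ so that the current-threshold rounding and the future-threshold discretization are separated. The main remaining obstacle is therefore bookkeeping about feasibility: I must verify that the nonemptiness hypotheses of Lemma \ref{V_sens} transfer to the pair $(r_k^{(\tau)},r_k)$, and dispatch the degenerate case $F_k(x_k,r_k^{(\tau)})=\emptyset$ (in which $F^{D}_k(x_k,r_k^{(\tau)})\subseteq F_k(x_k,r_k^{(\tau)})=\emptyset$ as well, so $T^{D}_{\Delta,k}$ collapses to the $\infty$ convention and the bound holds vacuously on the relevant branch).
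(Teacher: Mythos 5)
Your proposal is correct and is essentially the paper's own proof: the same splitting through the intermediate term $T_k[V_{k+1}](x_k,r_k^{(\tau)})$, with Lemma \ref{DP_delta} bounding the grid-rounding of future thresholds, Theorem \ref{TC_good} plus Lemma \ref{V_sens} bounding the rounding of the current threshold by $M_{V,k}|r_k-r_k^{(\tau)}|\leq M_{V,k}\Delta$, and a final supremum over $(x_k,r_k)$. Your extra care about feasibility (the empty-set branch and checking $r_k^{(\tau)}\in\Phi_k(x_k)$) and the observation that the bound is in fact one-sided are minor refinements the paper leaves implicit.
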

\begin{proof}
For any given $x_k\in S$ and $r_k\in\Phi_k(x_k)$, let $\tau\in\{0,\ldots,t\}$ such that $r_k\in\Phi^{(\tau)}_k(x_k)$. Then, by the definition of $T^{D}_{\Delta,k}[V_{k+1}](x_k, r_k)$ and Theorem \ref{TC_good}, the following expression holds:
\[
\begin{split}
&|T^{D}_{\Delta,k}[V_{k+1}](x_k, r_k)-T_k[V_{k+1}](x_k, r_k)| \leq|V_{k}(x_k,r_k^{(\tau)})- \\
&V_{k}(x_k,r_k)|+|\bar{T}^{D}_{\Delta,k}[V_{k+1}](x_k, r^{(\tau)}_k)-T_k[V_{k+1}](x_k,r^{(\tau)}_k)|.
\end{split}
\]
Also, by using Lemma \ref{V_sens} and \ref{DP_delta}, the above expression implies that
\[
\begin{split}
&\left |T^{D}_{\Delta,k}[V_{k+1}](x_k, r_k)-T_k[V_{k+1}](x_k, r_k)\right |\\
\leq& M_{V,k+1}\Delta+M_{V,k}|r_k-r_k^{(\tau)}|\leq (M_{V,k}+M_{V,k+1})\Delta.
\end{split}
\]
The last inequality follows from the fact that $r_k\in\Phi^{(\tau)}_k(x_k)$ implies $|r_k^{(\tau)}-r_k|\leq \Delta$, where $r_k^{(\tau)}$ is the lower bound of the discretized region of risk threshold: $\Phi^{(\tau)}_k(x_k)$. By taking supremum of $x_k\in S$ and $r_k\in\Phi_k(x_k)$ on both sides of the resultant inequality, we conclude the inequality given in expression (\ref{exp_approx_bdd_1}).
\end{proof}
Next, define $M_r=\max_{k\in\{0,\ldots,N-1\}}M_{r,k}$. The following Theorem provides an error bound between the value function: $V_{k}(x_k,r_k)$ and the value function with discretizations: $V_{k}^{D}(x_k,r_k)$.
\begin{theorem}\label{lem_diff_val_itr}
Define $V_{k}^{D}(x_k,r_k):=T^{D}_{\Delta,k}[V^{D}_{k+1}](x_k,r_k)$, $k\in\{0,\ldots,N-1\}$ as the value function with discretized risk threshold/update where $V^{D}_{N}(x_{N}, r_{N}):=V_{N}(x_{N}, r_{N})=0$. Suppose Assumptions (\ref{assume_lip}) to (\ref{assume_Q}) hold. Then,
\begin{equation*}
\begin{split}
&\|V^{D}_{k}-V_{k}\|_\infty\leq \,2\Delta\bigg(\frac{(M_rM_qc_{\text{max}}-M_c(1-M_r))(1-M_r^N)}{(1-M_r)^3}\\
&+\frac{N(N-1)M_rM_q c_{\text{max}}}{2(1-M_r)}+\frac{N(M_c(1-M_r)-M_qM_rc_{\text{max}})}{(1-M_r)^2}\bigg)
\end{split}
\end{equation*}
where $\Delta$ is the step size of the of risk threshold discretization.
\end{theorem}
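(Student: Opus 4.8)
The plan is to run a backward induction on $k$ for the quantity $\|V^D_k - V_k\|_\infty$, converting it into a telescoping sum controlled by the one-stage error bound of Lemma \ref{thm_diff_DP}. Since $V_k = T_k[V_{k+1}]$ by Theorem \ref{TC_good} and $V^D_k = T^{D}_{\Delta,k}[V^D_{k+1}]$ by definition, I would insert the hybrid iterate $T^{D}_{\Delta,k}[V_{k+1}]$ and split by the triangle inequality:
\[
\|V^D_k - V_k\|_\infty \leq \|T^{D}_{\Delta,k}[V^D_{k+1}] - T^{D}_{\Delta,k}[V_{k+1}]\|_\infty + \|T^{D}_{\Delta,k}[V_{k+1}] - T_k[V_{k+1}]\|_\infty.
\]
The first term is at most $\|V^D_{k+1} - V_{k+1}\|_\infty$ by non-expansivity of $T^{D}_{\Delta,k}$ (which inherits all properties of Lemma \ref{prop_DP_op}, as noted after its statement), and the second term is at most $(M_{V,k}+M_{V,k+1})\Delta$ by Lemma \ref{thm_diff_DP}. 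This yields the recursion $\|V^D_k - V_k\|_\infty \leq \|V^D_{k+1} - V_{k+1}\|_\infty + (M_{V,k}+M_{V,k+1})\Delta$.

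Next I would unroll this recursion from the base case $V^D_N = V_N$, i.e.\ $\|V^D_N - V_N\|_\infty = 0$, to obtain $\|V^D_k - V_k\|_\infty \leq \Delta\sum_{j=k}^{N-1}(M_{V,j}+M_{V,j+1})$. Re-indexing and using $M_{V,N}=0$ gives $\sum_{j=k}^{N-1}(M_{V,j}+M_{V,j+1}) = M_{V,k} + 2\sum_{j=k+1}^{N-1}M_{V,j}$, which is at most $2\sum_{j=k}^{N-1}M_{V,j}$ because every $M_{V,j}\geq 0$; this is the origin of the leading factor $2$ in the statement. Overbounding by the $k=0$ sum $2\sum_{j=0}^{N-1}M_{V,j}$ produces the $k$-independent worst-case bound whose $N$-dependence matches the right-hand side.

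It then remains to evaluate $\sum_{j=0}^{N-1}M_{V,j}$ explicitly. Replacing each $M_{r,j}$ by $M_r = \max_j M_{r,j}$ in the recursion of Lemma \ref{V_sens} gives the linear inequality $M_{V,j}\leq M_r(M_c + M_q(N-j-1)c_{\text{max}} + M_{V,j+1})$ with $M_{V,N}=0$, which unrolls to the closed form $M_{V,j}\leq \sum_{i=j}^{N-1}M_r^{\,i-j+1}(M_c + M_q(N-1-i)c_{\text{max}})$. Substituting this into $2\Delta\sum_{j=0}^{N-1}M_{V,j}$ and swapping the order of summation reduces everything to $\frac{2\Delta M_r}{1-M_r}\sum_{i=0}^{N-1}(M_c + M_q(N-1-i)c_{\text{max}})(1-M_r^{\,i+1})$.

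The main obstacle is the final closed-form evaluation of this sum. I would reduce it to the two primitive series $\sum_m M_r^m$ and $\sum_m m\,M_r^m$: the geometric series supplies the factor $1-M_r^N$ and the denominators that are powers of $1-M_r$, while the arithmetico-geometric series, combined with the explicit arithmetic weight $N-1-i$, supplies the $\tfrac{N(N-1)}{2}$ term as well as the $1/(1-M_r)^2$ and $1/(1-M_r)^3$ contributions. Collecting the coefficients of $M_c$ and of $M_q c_{\text{max}}$ and regrouping should reproduce exactly the three fractions in the statement; this bookkeeping is the only delicate part, and the denominators $(1-M_r)^3$, $(1-M_r)^2$, $(1-M_r)$ together with the factor $1-M_r^N$ are precisely the signature of these geometric and arithmetico-geometric sums.
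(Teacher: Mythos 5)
Your proposal tracks the paper's own proof step for step up to the bound $\|V^{D}_{k}-V_{k}\|_\infty\leq 2\Delta\sum_{j=0}^{N-1}M_{V,j}$: the hybrid term $T^{D}_{\Delta,j}[V_{j+1}]$, the triangle inequality, non-expansivity inherited from Lemma \ref{prop_DP_op}, the one-stage bound of Lemma \ref{thm_diff_DP}, the telescoping sum, and the factor of $2$ are all exactly the paper's argument. The genuine gap is in the last step, where you assert that evaluating your sum ``should reproduce exactly the three fractions in the statement.'' It does not, and no regrouping will make it. Your (correct) unrolling $M_{V,j}\leq\sum_{i=j}^{N-1}M_r^{\,i-j+1}\bigl(M_c+M_q(N-1-i)c_{\text{max}}\bigr)$ leads, after swapping the order of summation, to
\[
2\Delta\,\frac{M_r}{1-M_r}\sum_{i=0}^{N-1}\bigl(M_c+M_q(N-1-i)c_{\text{max}}\bigr)\bigl(1-M_r^{\,i+1}\bigr),
\]
whose coefficient of $M_c$ is $\frac{NM_r}{1-M_r}-\frac{M_r^2(1-M_r^N)}{(1-M_r)^2}$, whereas the coefficient of $M_c$ in the statement's right-hand side is $\frac{N}{1-M_r}-\frac{1-M_r^N}{(1-M_r)^2}$. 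These are different functions of $M_r$: at $N=1$ your expression evaluates to $2\Delta M_rM_c$ (consistent with the recursion of Lemma \ref{V_sens}, which gives $M_{V,0}\leq M_rM_c$ when $N=1$), while the statement's right-hand side collapses to $0$.

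The source of the mismatch is that the paper does not evaluate the exact unrolled sum. It first bounds each constant by the per-stage closed form $M_{V,k}\leq\frac{kM_rM_qc_{\text{max}}+M_c(1-M_r^k)}{1-M_r}-\frac{M_rM_qc_{\text{max}}(1-M_r^k)}{(1-M_r)^2}$ and then sums over $k=0,\ldots,N-1$; the three stated fractions are precisely that sum. Note that this closed form vanishes at $k=0$ even though $M_{V,0}$ is the largest of the constants (the exponent appears to count remaining stages $N-k$ rather than $k$), so it is not an upper bound for the quantity produced by the recursion you unrolled; your evaluation is the faithful one, and it genuinely disagrees with the paper's. Consequently, as written, your argument proves a valid error bound of the same linear-in-$\Delta$ form, but it cannot terminate in the constant claimed by the theorem; to land on that constant you would have to adopt the paper's per-stage closed form (with its index convention) rather than the exact unrolling, or else restate the theorem with the constant your computation actually yields.
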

\begin{proof}
From Theorem \ref{thm_diff_DP}, we know that for $j\in\{k,\ldots,N-1\}$, $\|T^{D}_{\Delta,j}[V_{j+1}]-T_{j}[V_{j+1}]\|_\infty\leq (M_{V,j}+M_{V,j+1})\Delta$, where $\Delta$ is the step size of the discretization of risk threshold $r_{j}$. Therefore, we have the following expressions:
\begin{alignat*}{1}
 &\|V^{D}_{j}-V_{j}\|_\infty=\|T^{D}_{\Delta,j}[V^{D}_{j+1}]-T_{j}[V_{j+1}]\|_\infty\\
  \leq&  \|T^{D}_{\Delta,j}[V^{D}_{j+1}]-T^{D}_{\Delta,j}[V_{j+1}]\|_\infty+ \|T^{D}_{\Delta,j}[V_{j+1}]-T_{j}[V_{j+1}]\|_\infty\\
 \leq & \|V^{D}_{j+1}-V_{j+1}\|_\infty+(M_{V,j}+M_{V,j+1})\Delta.
\end{alignat*}
The first equality is due to Theorem \ref{TC_good} and the fact that $V_{j}^{D}(x_j,r_j)=T^{D}_{\Delta,j}[V^{D}_{j+1}](x_j,r_j)$. The third inequality is based on the non-expansivity property in Lemma \ref{prop_DP_op} and the arguments in Theorem \ref{thm_diff_DP}. Furthermore,
\[
\begin{split}
 \|V^{D}_{k}-V_{k}&\|_\infty=\sum_{j=k}^{N-1} \Bigl(\|V^D_{j}-V_{j}\|_\infty- \|V^D_{j+1}-V_{j+1}\|_\infty\Bigr)\\
 \leq&\left(\sum_{j=k}^{N-1}M_{V,j}+M_{V,j+1}\right)\Delta\leq 2\left(\sum_{j=0}^{N-1}M_{V,j}\right)\Delta.
\end{split}
\]
\iftoggle{paper}
{
Now, recall
\[
\begin{split}
M_{V,k}=&(M_{c}+M_q(N-k-1)c_{\text{max}}+M_{V,k+1})M_{r,k}\\
\leq&(M_{c}+M_q(N-k-1)c_{\text{max}}+M_{V,k+1})M_{r}
\end{split}
\]
By geometric series, we can show that
\[
M_{V,k}\leq \frac{kM_rM_q c_{\text{max}}+M_c(1-M_r^k)}{1-M_r}-\frac{M_rM_qc_{\text{max}}(1-M_r^k)}{(1-M_r)^2}
\]}
{}
Therefore, the proof is completed by summing the right side of the inequality from $0$ to $N-1$ and combining all previous arguments.
\end{proof}
As the step size $\Delta\rightarrow 0$, for any $x_k\in S$ and $r_k\in\Phi_k(x_k)$, this Theorem implies that $V_k^D(x_k,r_k)\rightarrow V_k(x_k,r_k)$.
\begin{remark}\label{curse_dim}
Unfortunately, similar to all multi-grid discretization approaches discussed in \cite{Chow_Tsitsiklis_91}, \cite{tsitsiklis_van_roy_96}, \cite{gordon_99}, the multi-grid discretization algorithm in this paper also suffers from the curse of dimensionality. Suppose the number of discretized grid  used is $|R|$. For each time horizon, the size of state space is $|S||R|$. However, the size of the action space is $|A|(|R|)^{|S|}$. Methods such as Branch and bound or rollout algorithms can be applied to find the minimizers in each step to alleviate this issue, if the upper/lower bounds of the value functions are effectively calculated.
\end{remark}

\section{Numerical Implementation}\label{sec:num}
Consider an example with $3$ states ($x\in\{1,2,3\}$), $2$ available actions ($u\in\{1,2\}$) with time horizon $N=3$. The costs, constraint costs and transition probabilities are given as follows:
\[
\begin{split}
&\small{\begin{bmatrix}
c(1,1)&c(1,2)\\
c(2,1)&c(2,2)\\
c(3,1)&c(3,2)\\
\end{bmatrix}=\begin{bmatrix}
1&3\\
2&4\\
5&6\\
\end{bmatrix},\,\begin{bmatrix}
d(1,1)&d(1,2)\\
d(2,1)&d(2,2)\\
d(3,1)&d(3,2)\\
\end{bmatrix}=\frac{1}{10}\begin{bmatrix}
5&4\\
6&3\\
5&1
\end{bmatrix}},\\
&\small{Q(x^\prime|x,1)=\begin{bmatrix}
0.2 &0.5 &0.3\\
0.4 &0.3& 0.3\\
 0.3 &0.3& 0.4
\end{bmatrix},\, Q(x^\prime|x,2)=\begin{bmatrix}
0.3& 0.5& 0.2\\
0.2& 0.3& 0.5\\
0.3& 0.4& 0.3\\
\end{bmatrix}}.
\end{split}
\]
For any $x_0\in S$ and $r_0\in\Phi_0(x_0)$, the risk sensitive constrained stochastic optimal control problem  we are solving is as follows:
\begin{alignat*}{2}
\min_{\pi \in \Pi} & & \quad&\expectation{\sum_{k=0}^{2}\, c(x_k, u_k)} \\
\text{subject to} & & \quad&\risk_{0,3}\Bigl(d(x_0,u_0), d(x_1,u_1), d(x_{2},u_{2}),0\Bigr) \leq r_0.
\end{alignat*}
where $u_k=\pi_k(h_{0,k})$ for $k\in\{0,1,2\}$, $\risk_{0,N}(Z_0,Z_1,Z_2,Z_3)= Z_0 + \risk_0(Z_{1} + \risk_{1}(Z_{2}+\risk_{2}(Z_3)))$ and
\begin{align*}
\risk_k(V) = \expectation{V} + 0.2\, \Bigl (\expectation{[V - \expectation{V}]_{+}^2} \Bigr)^{1/2}.
\end{align*}
First, this problem can be re-casted using multi-stage constrained dynamic programming using the methods described by Theorem IV.3 in \cite{Chow_Pavone_13_1}. Furthermore, based on equations (\ref{dis_DP_1}) to (\ref{dis_DP_2}), we can approximate the optimal value function using risk threshold/update discretization. In this example, we discretize every risk threshold sets into $M$ regions, where
\[
M\in\{5,10,20,40,60,80,100,150\}.
\]
With different sizes of risk threshold discretization, we get approximations of optimal value functions, up to various degrees of accuracies. Figure \ref{fig_st} shows both the approximations of value function using various step sizes and their errors of approximations. As the number of $M$ increases, the approximated value function converges towards the true optimal value function. However, as discussed in Remark \ref{curse_dim}, the size of action space increases exponentially with the number of states, thus it makes enumerating all state/action pairs during value iteration computationally expensive.

\begin{figure}[h]
\centering
{
  \includegraphics[width = 0.5\textwidth]{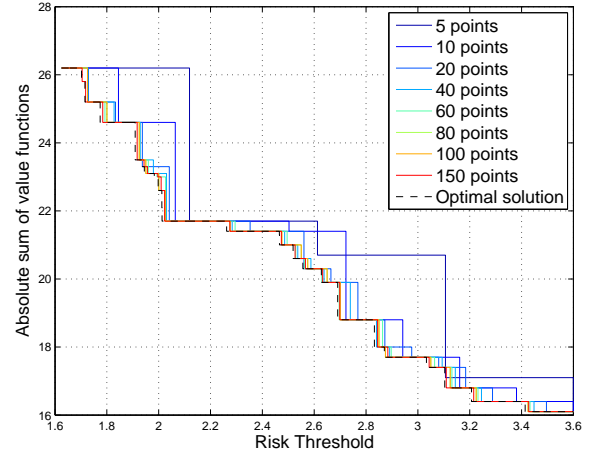}
      }
      \caption{Convergence of approximated value functions, and errors of approximations.}
      \label{fig_st}
\end{figure}

\section{Conclusion}\label{sec:conc}
In this paper we have presented and analyzed an uniform grid discretization algorithm for approximating the Bellman's recursion for finite horizon constrained stochastic optimal control problems. Although the current algorithm suffers from curse of dimensionality, it is by far the only known algorithm for numerically approximating constrained dynamic programming algorithms with continuous risk updates. This paper also leaves important extensions open for further researches that involve randomized grid sampling and variable resolution of discretization.

\bibliographystyle{unsrt}
\bibliography{ref_dyn_pro2}

\addtolength{\textheight}{-14cm}   
\end{document}